\newcommand{\RR}{\mathbb{R}}
\newcommand{\dist}{{\mathbf{dist}}}
\DeclareMathOperator*{\Min}{minimize\quad}
\newcommand{\st}{\mbox{subject to}}
\newtheorem{theorem}{Theorem}[section]
\newtheorem{lemma}{Lemma}[section]
\newtheorem{definition}{Definition}[section]
\newtheorem{corollary}{Corollary}[section]
\begin{document}

\title{A cut-and-project perspective for linearized Bregman iterations}

\author{Yu-Hong Dai\thanks{LSEC, Academy of Mathematics and Systems Science, Chinese Academy of Sciences, Beijing, China. Email: \texttt{dyh@lsec.cc.ac.cn}}~~~Kangkang Deng\thanks{Department of Mathematics, National University of Defense Technology,
Changsha, Hunan 410073, China.  Email: \texttt{freedeng1208@gmail.com}}~~~ Hui Zhang\thanks{
Department of Mathematics, National University of Defense Technology,
Changsha, Hunan 410073, China.  Email: \texttt{h.zhang1984@163.com}
}
}

\date{\today}

\maketitle

\begin{abstract}
The linearized Bregman iterations (LBreI) and its variants are powerful tools for finding sparse or low-rank solutions to underdetermined linear systems. In this study, we propose a cut-and-project perspective for the linearized Bregman method via a bilevel optimization formulation, along with a new unified algorithmic framework. The new perspective not only encompasses various existing linearized Bregman iteration variants as specific instances, but also allows us to extend the linearized Bregman method to solve more general inverse problems. We provide a completed convergence result of the proposed algorithmic framework, including convergence guarantees to feasible points and optimal solutions, and the sublinear convergence rate.    Moreover, we introduce the Bregman distance growth condition to ensure linear convergence. At last, our findings are illustrated via numerical tests. 

\end{abstract}

\textbf{Keywords: linearized Bregman iteration, bilevel optimization, Bregman projections, sparse solution, cutting plane method,  growth condition}

\textbf{AMS subject classifications. 90C25, 90C30, 65K05, 49M37}


\section{Introduction}

The linearized Bregman iterations (LBreI) method, suggested by Darbon and Osher \cite{drabon2007bregman} and introduced in the seminal work \cite{yin2008bregman}, has become a popular method for finding regularized solutions to the underdetermined linear systems $Ax=b$, where $A\in \RR^{m\times n}$ with $m<n$ and $b\in \RR^m$ are given. For example, to seek sparse solutions, it solves the following linearly constrained optimization problem
\begin{equation}\label{BP}
\Min \lambda\|x\|_1+\frac{1}{2}\|x\|^2, ~~\st~~Ax=b,
\end{equation}
where $\lambda>0$ is a regularization parameter. The corresponding algorithmic scheme is straightforward and consists of two steps
\begin{eqnarray*}
x^*_{k+1}&=&x^*_k-A^T(Ax_{k}-b),\\
x_{k+1}&=&\mathcal{S}_\lambda(x_{k+1}^*)
\end{eqnarray*}
initialized with $x_0=x^*_0=0$, where $\mathcal{S}_\lambda(x)=\min\{|x|-\lambda, 0\}\textrm{sign}(x)$ is the component-wise soft shrinkage. At the first step, the dual variable $x_{k}^*$ is updated by performing a gradient descent with $f(x)=\frac{1}{2}\|Ax-b\|^2$ to approach the constraint $Ax=b$; at the second step, the primal variable $x_k$ is updated by applying the soft shrinkage operator to the dual variable $x_k^*$ so that a sparse solution can be obtained. The convergence of the two-step iteration has been analyzed in \cite{cai2009convergence}.


In order to speed up the convergence and extend the LBreI method to solve more general problems than \eqref{BP}, several novel insights into this method have emerged over the past decade. 
The first insight, elucidated in the work by \cite{yin2010analysis}, identifies LBreI as a gradient descent method for the dual problem of \eqref{BP}. This identification enables the application of gradient-based optimization techniques, including line search, Barzilai-Borwein, limited memory BFGS, nonlinear conjugate gradient, and Nesterov's methods, to accelerate the LBreI method.  A second critical insight, proposed by \cite{cai2009linearizedimage} considers the following least square problem
\begin{equation}\label{gBP}
\begin{array}{ccc}
&\Min& ~~\lambda\|x\|_1+\frac{1}{2}\|x\|^2~\\
&\st&~~x\in \arg\min_x \frac{1}{2}\|Ax-b\|^2,
\end{array}
\end{equation}
where the measurement data $b$ can be out the range of $A$. In particular, if $b$ is in the range of $A$, then the inner level optimization reduces to $Az=b$ hence \eqref{gBP} reduces to \eqref{BP}. The authors \cite{lorenz2014linearized} introduce an algorithmic framework using the Bregman projection to address the split feasibility problem as follows:
\begin{equation}\label{CFP}
 \Min~\omega(x), ~\st~ x\in \bigcap_{i=1}^N C_i,
\end{equation}
where $C_i$ are closed convex sets that are given by imposing convex constraints $Q_i\subset \RR^{N_i}$ in the range of a matrix $A_i\in \RR^{N_i\times n}$, i.e., $C_i=\{x\in\RR^n: A_ix\in Q_i\}$.   
Within this framework, the system of linear equations $Ax=b$ can be reformulated as a split feasibility problem, placing the LBreI method as a special case within a broader class of methods that includes popular techniques like the Kaczmarz method \cite{karczmarz1937angenaherte} and the Landweber method \cite{hanke1991accelerated}. Moreover, \eqref{CFP} also covers more generalized problems, including other objective functions $\omega$ and incorporation of non-Gaussian noise models, and box constraints. 

Despite these advancements, it is important to note the primary theoretical limitation of this framework\cite{lorenz2014linearized}: the convergence analysis remains incomplete. In particular, they only give a monotonic decrease in terms of Bregman distance. The convergence to optimal solution and convergence rate are unknown. This naturally gives rise to the following question:
\begin{quote}
{\textit{Can we design a unified algorithmic framework that covers LBreI with a completed convergence result?}}
\end{quote}

We affirmatively answer this question by offering a novel perspective on LBreI through the lens of bilevel optimization. We formulate the above problems as the bilevel problem and propose a cut-and-project method to encompass LBreI and the algorithm outlined in \cite{lorenz2014linearized}, providing a comprehensive convergence result. In particular, we employ the cutting plane and Bregman projection methods to create a straightforward yet broadly applicable algorithmic framework, accompanied by a comprehensive convergence analysis. It is noteworthy that our approach not only provides a clear geometry intuition but also demonstrates significant potency in surpassing existing theoretical frameworks.

Besides the previously mentioned several understandings of LBreI, there are other important extensions. For example, the authors of \cite{cai2010singular} proposed the singular value thresholding (SVT) algorithm which is a matrix-form variant of LBreI and has become a classic method for matrix completion. Additionally, the authors of \cite{benning2017choose} extended the LBreI method to a larger class of non-convex functions for a wider range of imaging applications. 
In this study, we will rediscover the main result in \cite{cai2009linearized} as a special case within our proposed framework, utilizing an essentially different line of thought.

The main contributions of this paper are summarized as follows:

\begin{itemize}
\item
By introducing a bilevel optimization formulation, we present a cut-and-project perspective to reexamine Bregman regularized iterations. This method not only encompasses existing Bregman regularized iterations but also applies to solving more general inverse problems, including certain sparse noise models.

\item 
A detailed analysis is conducted on the impact of step-sizes on the algorithm, particularly identifying the precise range of step-sizes that can guide the selection of practical step-sizes. Furthermore, we provide a convergence guarantee to a feasible point with a large range of step-sizes, which are wider compared to existing literature.

\item
To establish the convergence of the proposed algorithmic framework to the optimal solution of the problem, we provide a unified convergence condition. When applied to specific examples, our condition is less restrictive compared to conditions proposed in \cite{zhang2023revisiting}.
\item
A Bregman distance growth condition is introduced to ensure the linear convergence of the algorithm.   Additionally, we derive sublinear convergence results for the algorithm, which has only been analyzed for specific problems so far, such as the SVT algorithm \cite{cai2010singular} or cases where the regularization function is smooth.

\end{itemize}

The outline of the paper is as follows. Section \ref{se2} presents some preliminaries and discussions. 
In Section \ref{sec:alg}, we give the bilevel optimization formulation, present the proposed algorithm framework and give the descent lemma. The main convergence results, including the convergence to a feasible point, the convergence to the optimal solution, the linear convergence and sublinear convergence, are provided in Section \ref{se5}. The numerical
experiments of our algorithm are given in Section \ref{sec6}.

\section{Preliminaries}\label{se2}

\subsection{Notation}
In this paper, we restrict our analysis to real finite dimensional spaces $\RR^n$. We use $\langle \cdot, \cdot\rangle$ to denote the inner product and  $\|\cdot\|$ to denote the Euclidean norm. For a multi-variables function $f(x,y)$, we use $\nabla_xf$ (respectively, $\nabla_yf$) to denote the gradient of $f$ with respective to $x$ (respectively, $y$). Let $Q\subset \RR^n$ be a nonempty set; the distance function onto the set $Q$ is defined by   
  $\dist(x,Q):=\inf_{y\in Q}\|x-y\|$. The indicator function of a set $\mathcal{C}$, denoted by $\delta_{\mathcal{C}}$, is set to be zero on $\mathcal{C}$ and $+\infty$ otherwise.

\subsection{Convex analysis tools}
Some basic notations and facts about convex analysis will be used in our results.
\begin{definition}\label{sc0}
A function $\omega:\RR^d\rightarrow \RR$ is convex if for any $\alpha\in [0,1]$ and $u, v\in \RR^d$, we have
\begin{equation*}
\omega(\alpha u+(1-\alpha)v)\leq \alpha \omega(u)+(1-\alpha)\omega(v);
\end{equation*}
and strongly convex with modulus $\mu> 0$ if for any $\alpha\in [0,1]$ and $u, v\in \RR^d$, we have
\begin{equation*}
\omega(\alpha u+(1-\alpha)v)\leq \alpha \omega(u)+(1-\alpha)\omega(v)-\frac{1}{2}\mu\alpha(1-\alpha)\|u-v\|^2. \label{SC1}
\end{equation*}
Furthermore, $\omega$ is strictly convex if for any $u\neq v$ and $\alpha\in (0,1)$ we have 
\begin{equation*}
\omega(\alpha u+(1-\alpha)v)< \alpha \omega(u)+(1-\alpha)\omega(v).
\end{equation*}
\end{definition}

\begin{definition}
Let $\omega:\RR^d\rightarrow \RR$ be a convex function. The subdifferential of $\omega$  at $u\in \RR^d$ is defined as
$$\partial \omega (u):= \{ u^* \in \RR^d: \omega(v)\geq \omega(u)+ \langle u^*, v-u\rangle,\quad \forall v\in \RR^d \}.$$
The elements of $\partial \omega(u)$ are called the subgradients of $\omega$ at $u$.
\end{definition}

The subdifferential generalizes the classical concept of differential because of the well-known fact that $\partial \omega(u)=\{\nabla \omega(u)\}$ when the function $\omega$ is differentiable.
In terms of the subdifferential, the strong convexity in Definition \ref{sc0} can be equivalently stated as \cite{hiriart2004fundamentals}: For any $u, v\in \RR^d$ and $v^*\in \partial\omega(v)$, we have
\begin{equation}\label{sc01}
 \omega(u)\geq \omega(v)+\langle v^*, u-v\rangle +\frac{\mu}{2}\|u-v\|^2.
\end{equation}

\begin{definition}
Let $\omega:\RR^d\rightarrow \RR$ be a convex function. The conjugate of $\omega$ is defined as
$$\omega^*(u^*)=\sup_{v\in \RR^d}\{\langle u^*,v \rangle -\omega(v)\}.$$
\end{definition}

\begin{definition}
A function $\omega:\RR^d\rightarrow \RR$ is gradient-Lipschitz-continuous with modulus $L>0$ if it is continuously differentiable over $\RR^n$ and its gradient is Lipschitz continuous in the sense that for any $u, v\in \RR^d$, we have
\begin{equation*}
\|\nabla \omega(u)-\nabla \omega(v)\|\leq L\|u-v\|. \label{Lip}
\end{equation*}
\end{definition}

The following facts are well-known, and can be found in the classic textbooks \cite{rockafellar1997convex} and \cite{hiriart2004fundamentals}.

\begin{lemma}\label{scLip}
Let $\omega:\RR^d\rightarrow \RR$ be a strongly convex function with modulus $\mu> 0$. Then we have that
 \begin{itemize}
   \item its conjugate $\omega^*$ is gradient-Lipschitz-continuous with modulus $\frac{1}{\mu}$;
   \item the conditions $\omega(u)+\omega^*(u^*)=\langle u, u^*\rangle$, $u^*\in \partial \omega(u)$, and $u\in\partial \omega^*(u^*)$ are equivalent.
 \end{itemize}

\end{lemma}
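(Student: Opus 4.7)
The plan is to treat the two bullet points separately, since they are standard but rely on slightly different tools. For the first bullet, the strategy is to exploit strong convexity to upgrade the classical monotonicity of subgradients to a coercive monotonicity, and then invert to bound the displacement of $\nabla\omega^*$ in terms of the displacement of its argument. For the second bullet, I would use the definition of the Fenchel conjugate together with the subgradient inequality and symmetry via biconjugation.

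For the first claim, my plan is as follows. Since $\omega$ is strongly convex with modulus $\mu>0$, it is coercive, so for every $u^*\in\RR^d$ the supremum defining $\omega^*(u^*)=\sup_v\{\langle u^*,v\rangle-\omega(v)\}$ is attained at a unique point, call it $v(u^*)$. I would then invoke the classical identification (to be justified via Fenchel--Young, which I prove in the second bullet) that $v(u^*)=\nabla\omega^*(u^*)$ and $u^*\in\partial\omega(v(u^*))$. Pick two points $u_1^*,u_2^*$ and set $v_i=\nabla\omega^*(u_i^*)$. Applying the strong-convexity subgradient inequality \eqref{sc01} in both directions,
\begin{align*}
\omega(v_2)&\geq \omega(v_1)+\langle u_1^*,v_2-v_1\rangle+\tfrac{\mu}{2}\|v_1-v_2\|^2,\\
\omega(v_1)&\geq \omega(v_2)+\langle u_2^*,v_1-v_2\rangle+\tfrac{\mu}{2}\|v_1-v_2\|^2,
\end{align*}
and adding them yields $\langle u_1^*-u_2^*,v_1-v_2\rangle\geq \mu\|v_1-v_2\|^2$. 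Cauchy--Schwarz then gives $\|v_1-v_2\|\leq \tfrac{1}{\mu}\|u_1^*-u_2^*\|$, which is exactly the $\tfrac{1}{\mu}$-Lipschitz continuity of $\nabla\omega^*$.

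For the second claim, I would show the three implications cyclically. First, $u^*\in\partial\omega(u)$ means $\omega(v)\geq\omega(u)+\langle u^*,v-u\rangle$ for all $v$, equivalently $\langle u^*,v\rangle-\omega(v)\leq\langle u^*,u\rangle-\omega(u)$ for all $v$, so $u$ attains the supremum in the definition of $\omega^*(u^*)$, giving the Fenchel--Young equality $\omega(u)+\omega^*(u^*)=\langle u,u^*\rangle$. Conversely, if the equality holds then for any $v$ we have $\omega(v)\geq\langle u^*,v\rangle-\omega^*(u^*)=\langle u^*,v\rangle+\omega(u)-\langle u^*,u\rangle=\omega(u)+\langle u^*,v-u\rangle$, recovering $u^*\in\partial\omega(u)$. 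The symmetric implication for $u\in\partial\omega^*(u^*)$ follows identically by interchanging the roles of $\omega$ and $\omega^*$, using the fact that $\omega$ is closed convex (strong convexity implies it), so $\omega^{**}=\omega$, which is what allows one to read the Fenchel--Young equality from either side.

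The only delicate step is the differentiability of $\omega^*$ used in the first bullet. I expect this to be the main obstacle if one insists on a fully self-contained proof: it requires observing that the unique maximizer $v(u^*)$ depends continuously on $u^*$ (a consequence of strong convexity and Berge's maximum theorem, or directly of the estimate above) and then invoking the standard result that a convex function whose subdifferential is single-valued is Gateaux differentiable with gradient equal to that single element. Once this point is granted, the rest is the short arithmetic above, and the two bullets together deliver the lemma.
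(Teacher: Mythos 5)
Your proposal is correct. Note that the paper does not prove this lemma at all: it states it as a well-known fact and cites Rockafellar and Hiriart-Urruty--Lemar\'echal, so there is no "paper proof" to compare against. Your argument is essentially the standard textbook one: the Fenchel--Young equivalences follow from the definition of the conjugate plus $\omega^{**}=\omega$ (valid since a finite convex function on $\RR^d$ is continuous, hence closed), and the $\tfrac{1}{\mu}$-Lipschitz bound follows from the strong monotonicity $\langle u_1^*-u_2^*, v_1-v_2\rangle \geq \mu\|v_1-v_2\|^2$ obtained by summing the two strong-convexity subgradient inequalities, followed by Cauchy--Schwarz. You also correctly identify the one genuinely delicate point, namely that $\omega^*$ is differentiable in the first place: strong convexity gives coercivity and strict convexity, so the supremum defining $\omega^*(u^*)$ is attained at a unique $v(u^*)$, and via Fenchel--Young every element of $\partial\omega^*(u^*)$ must be such a maximizer, so $\partial\omega^*(u^*)=\{v(u^*)\}$ and single-valuedness of the subdifferential yields differentiability (with continuity of the gradient then coming for free from your Lipschitz estimate). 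This closes the argument; nothing is missing.
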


\subsection{Bregman distance tools}
The Bregman distance, originally introduced in \cite{bregman1967relaxation}, is a very powerful concept in many fields where distances are involved. Recently, many variants of Bregman distances have appeared, as seen in  \cite{bauschke1997legendre,kiwiel1997free,reem2019re}. For simplicity as well as generality, we choose the Bregman distance defined by a strongly convex function.
\begin{definition}
Let $\omega:\RR^d\rightarrow \RR$ be a strongly convex function with modulus $\mu> 0$. The Bregman distance $D_\omega^{v^*}(u,v)$ between $u, v\in \RR^d$ with respect to $\omega$ and a subgradient $v^*\in\partial \omega(v)$ is defined by
\begin{equation}\label{Breg}
D_\omega^{v^*}(u,v):=\omega(u)-\omega(v)-\langle v^*, u-v\rangle.
\end{equation}
\end{definition}
In the following, we state three basic facts about the Bregman distance, which will be used later in our analysis. It should be pointed out that the results are well-known (see e.g. \cite{kiwiel1997free,kiwiel1997proximal}). We list them here, along with a brief proof, for completeness.
\begin{lemma}\label{lemBreg}
Let $\omega:\RR^d\rightarrow \RR$ is strongly convex with modulus $\mu> 0$. For any $u, p, q\in \RR^d$ and $p^*\in \partial \omega (p), q^*\in \partial \omega (q)$, we have that
\begin{equation}\label{Bregdis1}
D_\omega^{p^*}(u,p) - D_\omega^{q^*}(u,q) + D_\omega^{q^*}(p,q)=\langle q^*-p^*, u-p\rangle,
\end{equation}
\begin{equation}\label{Bregdis2}
D_\omega^{q^*}(p,q)=D_{{\omega}^*}^p(q^*,p^*),
\end{equation}
and
\begin{equation}\label{Bregdis3}
D_\omega^{q^*}(p,q)\geq \frac{\mu}{2}\|p-q\|^2.
\end{equation}
\end{lemma}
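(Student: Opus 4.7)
The plan is to verify the three claims in order, each by a direct computation that unpacks the definition \eqref{Breg} of the Bregman distance and invokes one of the auxiliary facts collected above.

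For the three-point identity \eqref{Bregdis1}, I would substitute the definition into each of the three Bregman terms on the left, obtaining six $\omega$-values and three inner-product terms. The $\omega(u)$ terms cancel between $D_\omega^{p^*}(u,p)$ and $D_\omega^{q^*}(u,q)$, while $\omega(p)$ and $\omega(q)$ cancel between the remaining summands, so only the inner products survive. Collecting these and using $\langle q^*,u-q\rangle-\langle q^*,p-q\rangle=\langle q^*,u-p\rangle$ immediately yields $\langle q^*-p^*,u-p\rangle$. This is entirely mechanical, with no obstacle.

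For the Fenchel-conjugate identity \eqref{Bregdis2}, the key ingredient is Lemma \ref{scLip}: since $\omega$ is strongly convex, the equivalences $p^*\in\partial\omega(p)\iff p\in\partial\omega^*(p^*)$ and the Young equality $\omega(v)+\omega^*(v^*)=\langle v,v^*\rangle$ are at our disposal for both pairs $(p,p^*)$ and $(q,q^*)$. I would use these equalities to eliminate $\omega(p)$ and $\omega(q)$ in favor of $\omega^*(p^*),\omega^*(q^*)$ and the bilinear terms $\langle p,p^*\rangle,\langle q,q^*\rangle$, substitute into the definition of $D_\omega^{q^*}(p,q)$, and simplify. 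The resulting expression rearranges to $\omega^*(q^*)-\omega^*(p^*)-\langle p,q^*-p^*\rangle$, which is precisely $D_{\omega^*}^p(q^*,p^*)$; the only subtle point is verifying that $p$ plays the role of the anchoring subgradient, but this is exactly the content of $p\in\partial\omega^*(p^*)$.

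For the quadratic growth inequality \eqref{Bregdis3}, I would apply the subgradient form \eqref{sc01} of strong convexity with the roles $u\leftarrow p$, $v\leftarrow q$, $v^*\leftarrow q^*$, obtaining $\omega(p)\geq\omega(q)+\langle q^*,p-q\rangle+\tfrac{\mu}{2}\|p-q\|^2$. Rearranging isolates $D_\omega^{q^*}(p,q)$ on the left and gives the claim at once.

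None of the three parts presents a genuine obstacle; the lemma is bookkeeping built on top of Lemma \ref{scLip} and \eqref{sc01}. The most error-prone point is the sign/ordering convention in \eqref{Bregdis2}, so I would lay out the Young equality carefully before substituting, to make sure the base point $p$ and the subgradient $p^*$ swap roles cleanly when passing from $\omega$ to $\omega^*$.
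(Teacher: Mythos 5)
Your proposal is correct and follows essentially the same route as the paper: direct expansion of the definition for \eqref{Bregdis1}, the Young equality and subgradient inversion from Lemma \ref{scLip} (with $p\in\partial\omega^*(p^*)$ anchoring the conjugate Bregman distance) for \eqref{Bregdis2}, and the subgradient form \eqref{sc01} of strong convexity with $v^*=q^*$ for \eqref{Bregdis3}. No gaps.
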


\begin{proof}
The equality \eqref{Bregdis1} follows from \eqref{Breg}, and the inequality \eqref{Bregdis3} from \eqref{sc01} and \eqref{Breg}. In order to obtain \eqref{Bregdis2}, we derive that
\begin{eqnarray}
\begin{array}{lll}
D_\omega^{q^*}(p,q) &= & \omega(p)-\omega(q)-\langle q^*, p-q\rangle    \\
  &= &\langle p, p^*\rangle -\omega^*(p^*) -\langle q, q^*\rangle +\omega^*(q^*) -\langle q^*, p-q\rangle  \\
&= &\omega^*(q^*)-\omega^*(p^*) -\langle p, q^*-p^*\rangle\\
&=& D_{{\omega}^*}^p(q^*,p^*),
\end{array}
\end{eqnarray}
where the second and fourth lines follow by using the second part of Lemma \ref{scLip} and the condition $p^*\in \partial \omega (p), q^*\in \partial \omega (q)$.
\end{proof}

Now, we introduce the concept of Bregman projection.
\begin{definition}[Bregman projection \cite{kiwiel1997proximal}]\label{DefofBreg}
Let $\omega:\RR^d\rightarrow \RR$ be a strongly convex function with modulus $\mu> 0$, $C\subset \RR^n$ be a nonempty closed convex set. Given $x\in \RR^n$ and $x^*\in \partial \omega(x)$, the Bregman projection of $x$ onto $C$ with respect to $\omega$ and $x^*$ is the point $\Pi_C^{x^*}(x)\in C$ such that
\begin{equation}\label{BregP}
D^{x^*}_\omega(\Pi_C^{x^*}(x), x)=\min_{y\in C} D^{x^*}_\omega(y,x).
\end{equation}
\end{definition}
Note that the point $\Pi_C^{x^*}(x)$ exists and is unique due to the strong convexity of the objective function in \eqref{BregP}; besides, the notation $\Pi_C^{x^*}(x)$ is dependent on the function $\omega$ and reduces to the orthogonal projection, denoted by $\mathcal{P}_C(x)$, when $\omega(x)=\frac{1}{2}\|x\|^2.$ Similar to the traditional orthogonal projection theorem, the Bregman projection can be characterized by a variational inequality.
\begin{lemma}[Generalized projection theorem\cite{kiwiel1997proximal}]\label{gptlemm}
Let $\omega:\RR^d\rightarrow \RR$ be a strongly convex function with modulus $\mu> 0$ and $C\subset \RR^n$ be a nonempty closed convex. Then a point $z\in C$ is the Bregman projection of $x$ onto $C$ with respect to $\omega$ and $x^*\in \partial \omega (x)$ if and only if there is some point $z^*\in \partial \omega(z)$ such that one of the following equivalent conditions are fulfilled:
\begin{equation}\label{GPT1}
\langle z^*-x^*, y-z\rangle \geq 0, ~~\forall ~~y\in C,
\end{equation}
\begin{equation}\label{GPT2}
D^{z^*}_\omega(y,z)\leq D^{x^*}_\omega(y,x)-D^{x^*}_\omega(z,x), ~~\forall ~~y\in C.
\end{equation}
Any such $z^*$ is called an admissible subgradient for $z=\Pi_C^{x^*}(x).$ In particular, when $\omega(x)=\frac{1}{2}\|x\|^2$, the condition $x^*\in \partial \omega (x)$  reduces to $x^*=x$ and hence the inequality \eqref{GPT1} just becomes the orthogonal projection characterization.
\end{lemma}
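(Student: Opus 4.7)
The plan is to characterize the Bregman projection $z = \Pi_C^{x^*}(x)$ via the first-order optimality condition of its defining minimization, which will yield \eqref{GPT1}, and then to invoke the three-point identity \eqref{Bregdis1} from Lemma \ref{lemBreg} to convert \eqref{GPT1} into the descent-type inequality \eqref{GPT2}. Strong convexity of $\omega$ guarantees that the minimizer is unique, so each of the two conditions will be shown to be equivalent to $z = \Pi_C^{x^*}(x)$.

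First I would rewrite the Bregman projection as the unconstrained minimization
\[
z = \arg\min_{y \in \RR^n} \bigl( D_\omega^{x^*}(y, x) + \delta_C(y) \bigr).
\]
Since the objective $y \mapsto \omega(y) - \omega(x) - \langle x^*, y - x\rangle$ is finite-valued and strongly convex on $\RR^n$, the subdifferential sum rule applies without any extra qualification and the Fermat condition reads $0 \in \partial \omega(z) - x^* + N_C(z)$. This is exactly the statement that there exists $z^* \in \partial \omega(z)$ with $x^* - z^* \in N_C(z)$, i.e., \eqref{GPT1}. Conversely, any $z \in C$ that admits some $z^* \in \partial \omega(z)$ satisfying \eqref{GPT1} verifies the same inclusion, and by strong convexity must coincide with the unique minimizer $\Pi_C^{x^*}(x)$.

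To establish \eqref{GPT1} $\Leftrightarrow$ \eqref{GPT2}, I would then apply the three-point identity \eqref{Bregdis1} with the substitution $p = z$, $p^* = z^*$, $q = x$, $q^* = x^*$, and $u = y$, yielding
\[
D_\omega^{z^*}(y, z) - D_\omega^{x^*}(y, x) + D_\omega^{x^*}(z, x) = \langle x^* - z^*, y - z\rangle.
\]
The right-hand side is nonpositive for every $y \in C$ precisely when \eqref{GPT1} holds, and rearranging then gives \eqref{GPT2}. I do not foresee a genuine obstacle in this proof: every step is essentially bookkeeping once the three-point identity is in place. The only mildly subtle point is verifying that the sum rule $\partial(f + \delta_C) = \partial f + N_C$ applies to $f = D_\omega^{x^*}(\cdot, x)$, which is immediate because $f$ is finite everywhere, so the qualification $\rrint(\dom f) \cap \rrint C \neq \emptyset$ is automatic.
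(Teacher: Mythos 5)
Your proof is correct. Note that the paper itself does not prove this lemma --- it is quoted from Kiwiel's work \cite{kiwiel1997proximal} --- so there is no in-paper argument to compare against; your write-up is the standard one and fills that gap cleanly. The two ingredients are exactly right: the Fermat rule $0\in\partial\omega(z)-x^*+N_C(z)$ for the strongly convex problem $\min_{y\in C} D_\omega^{x^*}(y,x)$ (the sum rule is indeed unconditional here because $\omega$, hence $D_\omega^{x^*}(\cdot,x)$, is finite-valued on all of $\RR^n$), which is precisely the existence of an admissible $z^*\in\partial\omega(z)$ with $x^*-z^*\in N_C(z)$, i.e.\ \eqref{GPT1}; and the three-point identity \eqref{Bregdis1} with $(u,p,q)=(y,z,x)$, which turns \eqref{GPT1} into \eqref{GPT2} and back, since both amount to $\langle x^*-z^*,y-z\rangle\leq 0$ for all $y\in C$. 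Strong convexity gives uniqueness of the projection, so the converse direction (any $z\in C$ admitting such a $z^*$ is the projection) also holds as you argue, and the specialization to $\omega=\frac{1}{2}\|\cdot\|^2$ is immediate.
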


\section{The proposed algorithmic framework}\label{sec:alg}
\subsection{The problem formulation}\label{se3}
We are interested in the following bilevel optimization problem which consists of inner and outer levels. The inner level is the classical optimization problem
\begin{equation}\label{innopt}
\Min_{x\in \RR^n} f(x),
\end{equation}
where the optimal set is denoted by $\overline{X}$, assumed to be nonempty. The out level seeks to find the optimal solution of problem \eqref{innopt} which minimizes a given strongly convex function $\omega$:
\begin{equation}\label{outopt}
\Min_{x\in \overline{X}} \omega(x).
\end{equation}
The inner level objective function $f$ and the outer level objective function $\omega$ are assumed to satisfy the following:
\begin{itemize}
  \item $\omega$ is strongly convex over $\RR^n$ with parameter $\mu>0$;
  \item $f$ is convex and gradient-Lipschitz-continuous with modulus $L>0$.
\end{itemize}

By the strong convexity of $\omega$, problem \eqref{outopt} has a unique solution, denoted by $\bar{x}$. Putting the ingredients above together, we obtain the following bilevel optimization:
\begin{equation}\label{BilevelOpt}
\begin{aligned}
&\Min ~~ \omega(x)~\\
&\st~~x\in \arg\min_x f(x).
\end{aligned}
\end{equation}
The terminology of bilevel optimization, mainly following from papers \cite{beck2014first}, is simpler than classical bilevel optimization where the solution to the inner level optimization usually depends on variables or parameters.

This bilevel optimization encompasses various specific problems. When $\omega(x) = \lambda\|x\|_1 + \frac{1}{2}\|x\|^2$ and $f(x) = \frac{1}{2}\|Ax-b\|^2$, \eqref{BilevelOpt} simplifies to the least squares problem \eqref{gBP}. Furthermore, if $b\in \mathcal{R}(A)$, the inner-level optimization reduces to $Ax=b$, and thus \eqref{BilevelOpt} covers \eqref{BP}. Moreover, \eqref{BilevelOpt} encompasses the split feasibility problem \eqref{CFP} by defining $f(x):=\sum_{i=1}^N \dist^2(A_ix,Q_i)$, which is convex and gradient-Lipschitz-continuous. This is due to that minimizing $f$ is equivalent to finding $x\in \bigcap_{i=1}^N C_i$.

\subsection{Sketch of idea}
Our proposed iterative scheme will be grounded on a straightforward and intuitive geometric understanding. Let us start with a given point $x$ which does not lie in the optimal set $\overline{X}$. We can then construct a hyperplane to separate this given point $x$ from the optimal set $\overline{X}$. In order to gradually approach the optimal set $\overline{X}$, we project the given point $x$ onto the hyperplane, thereby obtaining updated points that progressively draw nearer to the optimal set. However, our aim is not solely for the iterates to converge to any solution of the inner-level optimization but specifically to the unique solution $\bar{x}$ of the bilevel optimization problem. To achieve this, we select the objective function $\omega$ of the outer-level optimization as a kernel to generate the Bregman distance $D_\omega$. Subsequently, we utilize the Bregman projection to replace the orthogonal projection. 

From the discussion above, the main idea of our method consists of two steps: constructing a halfspace (with 
 a hyperplane boundary) and performing the Bregman projection onto the halfspace. The first step relies on the Baillon-Haddad theorem \cite{baillon1977quelques}, which states that the convexity and Lipschitz continuity of gradient of $f$ is equivalent to the following inequality
$$\langle \nabla f(u)-\nabla f(v), u-v\rangle \geq \frac{1}{L}\|\nabla f(u)-\nabla f(v)\|^2.$$
Note that for any $x\in \overline{X}$, $\nabla f(x)=0$. Then, for any given point $z$, we have the following inclusion 
$$\overline{X}\subset H_z:= \{x\in\RR^n: \langle \nabla f(z), z-x\rangle \geq \frac{1}{L}\|\nabla f(z)\|^2\},$$
where $H_z$ is a halfspace which separates the given point $z$ and the optimal set $\overline{X}$. In particular, if $\nabla f(z)\neq 0$, then the separation is strict in the sense that $z$ is not in $H_z$ since in that case $z$ fails to satisfy the inequality $\langle \nabla f(z), z-x\rangle \geq \frac{1}{L}\|\nabla f(z)\|^2$. The next step is to project $z$ onto the halfspace $H_z$. Denote the project operator onto the halfspace $H_z$ as $\mathcal{P}$; then for any point $\hat{x}\in \overline{X}$, we can derive that 
$$\|\mathcal{P}(z)-\hat{x}\|=\|\mathcal{P}(z)-\mathcal{P}(\hat{x})\|\leq \|z-\hat{x}\|,$$ 
where the first relationship is due to $\hat{x}\in \overline{X}\subset H_z$ and the inequality follows from the nonexpansiveness of the project operator. Therefore, it is possible to get closer to the optimal set $\overline{X}$ after the project operation. 

The idea of our method can be simply summarized as follows: 
\begin{description}
  \item[s1.] Construct cutting halfspaces: $H_k=\{x\in\RR^n: \langle \nabla f(x_k), x_k-x\rangle \geq \frac{1}{L}\|\nabla f(x_k)\|^2\}.$
  \item[s2.] Update via Bregman projections: $x_{k+1}:=\Pi_{H_k}^{x^*_k}(x_k)$.
\end{description}

Lastly, we would like to mention two closely related works \cite{lorenz2014linearized} and \cite{beck2017first}, which have greatly inspired us. In \cite{lorenz2014linearized}, the authors presented a generalized algorithmic framework based on Bregman projections, which includes LBreI as a special case. However, their framework is restricted solely to solving split feasibility problems \eqref{CFP}. On the other hand, in \cite{beck2017first}, the authors introduced a novel algorithm based on the cutting plane method to address a class of bilevel convex problems. Nevertheless, their method necessitates the outer-level function to be smooth, and at each iteration, they are required to solve a constrained optimization problem, which may pose computational challenges. Our approach is motivated by amalgamating the strengths of these two works, aiming to overcome their respective limitations.

\subsection{The Bregman projection as a primal-dual problem}
The key computation in our method involves performing Bregman projections. In this part, we will formulate it as a primal-dual problem. To this end, we assume that $\nabla f(x_k)\neq 0$ and let $a_k:=\nabla f(x_k)$ and $$\beta_k:=\langle \nabla f(x_k), x_k\rangle-\frac{1}{L}\|\nabla f(x_k)\|^2.$$ Then the halfspaces defined in the first step of our method can be simplified as follows
    \begin{equation}\label{halfsp}
    H_k=\{x\in\RR^n: \langle a_k, x\rangle\leq \beta_k\}.
    \end{equation}
 From Definition \ref{DefofBreg},  the Bregman projections have the following formulation
 $$\Pi_{H_k}^{x^*_k}(x_k)=\arg\min_{x\in H_k} D_\omega^{x_k^*}(x,x_k),$$
 which is equivalent to solving the following constrained optimization
 \begin{eqnarray}\label{P0}
&\Min&  \omega(x)-\langle x_k^*, x\rangle \\ \nonumber 
&\st& \langle a_k, x\rangle \leq \beta_k. 
\end{eqnarray}
In order to rewrite \eqref{P0} as an unconstrained optimization, we let $\Omega_k=\{s\in\RR: s\leq \beta_k\}$, $\varphi(s)=\delta_{\Omega_k}(s)$, and $\psi(x)=\omega(x)-\langle x_k^*, x\rangle$; then  \eqref{P0} can be equivalently rewritten as 
    \begin{equation}\label{P}
    \Min_{x\in\RR^n} \psi(x)+\varphi(a_k^Tx),
    \end{equation}
 whose Fenchel-Rockafellar dual problem is 
   \begin{equation}\label{D}
    \Min_{t\in\RR} \psi^*(-ta_k)+\varphi^*(t).
    \end{equation}
 Note that $\psi^*(x)=\omega^*(x_k^*+x)$ and 
\begin{eqnarray*}
\varphi^*(t)=\delta^*_{\Omega_k} (t):=
\left\{\begin{array}{lll}
 \beta_kt,       &\textrm{if} ~~t\geq 0, \\
 +\infty, &\textrm{otherwise}.
\end{array} \right.
\end{eqnarray*} 
 The dual problem \eqref{D} can be further written as 
    \begin{equation}\label{D0}
    \Min_{t\geq 0} \omega^*(x_k^*-t a_k)+\beta_k t.
    \end{equation}
 Let $x\in\RR^n, t\in\RR$. Then $x$ is a primal solution and $t$ is a dual solution if and only if the KKT condition holds:
 \begin{eqnarray}
\left\{\begin{array}{rll}
 -t a_k&\in &\partial \psi(x)=\partial\omega(x)-x_k^*, \\
t&\in& \partial \varphi(a_k^Tx)=N_{\Omega_k}(a_k^\top x),
\end{array} \right.
\end{eqnarray}
where the normal cone
 \begin{eqnarray*}
N_{\Omega_k}(a_k^\top x)= \left\{\begin{array}{ll}
 \RR_+,& a_k^Tx=\beta_k, \\
\{0\},& a_k^Tx<\beta_k.
\end{array} \right.
\end{eqnarray*}
We observe that the case of $t\in \{0\}$ and $a_k^Tx<\beta_k$ can be excluded.  Indeed, let us focus on the dual objective function 
$$g(t)=\omega^*(x_k^*-t a_k)+\beta_k t.$$
Its derivative can be calculated as 
 $g^\prime(t)=-a_k^T\nabla \omega^*(x_k^*-t a_k)+\beta_k$ 
and hence we can derive that
\begin{eqnarray*}
\begin{array}{lll}
g^\prime(0) &= &  -a_k^T\nabla \omega^*(x_k^*)+\beta_k\\
  &= & -a_k^Tx_k+\beta_k\\
&= &-\frac{1}{L}\|\nabla f(x_k)\|^2<0,
\end{array}
\end{eqnarray*}
where we have utilized the fact of $x_k^*\in \partial \omega(x_k)$, the expressions of $a_k$ and $\beta_k$, and the assumption $\nabla f(x_k)\neq 0$. Hence, the function $g(t)$ must be decreasing for small enough $t>0$. In other words, the dual solution $t$ must be obtained as a positive number. Therefore, the dual problem can be equivalently rewritten as 
    \begin{equation}\label{D00}
    \Min_{t> 0} \omega^*(x_k^*- t a_k)+\beta_k t,
    \end{equation}
    and the KKT condition can be reformulated as the following form after some simple transformations:
  \begin{eqnarray}\label{KKT}
  \left\{\begin{array}{rll}
x &= &\nabla \omega^*(x_k^*-t a_k), \\
a_k^Tx&=& \beta_k,\\
t&>& 0.
\end{array} \right.
\end{eqnarray}  
In particular, the condition $a_k^Tx= \beta_k$ implies that the Bregman projection must lie in the hyperplane $\{x\in\RR^n: \langle \nabla f(x_k), x_k-x\rangle = \frac{1}{L}\|\nabla f(x_k)\|^2\}.$ Based on the KKT condition \eqref{KKT}, we can deduce the following result.

\begin{theorem}\label{theo:projection}
  Let $\hat{t}_k$ be a dual solution to \eqref{D00}. Then the Bregman projection of $x_k$ onto the halfspace $H_k$ can be obtained by the following formulation
  \begin{equation}\label{Bproj}
    \Pi_{H_k}^{x^*_k}(x_k)=\nabla \omega^*(x_k^*-\hat{t}_k a_k)
    \end{equation}
    and moreover $\hat{t}_k$ belongs to $(0,\frac{\mu}{L}]$. On the other hand, if the point $\nabla\omega^*(x_k^*-a_k\tilde{t}_k)$ with $\tilde{t}_k\in (0,\frac{\mu}{L}]$ lies in the hyperplane $\{x\in\RR^n: \langle \nabla f(x_k), x_k-x\rangle = \frac{1}{L}\|\nabla f(x_k)\|^2\}$, then it must be the unique point of the Bregman projection, that is 
     \begin{equation}\label{Bproj0}
  \nabla\omega^*(x_k^*-a_k\tilde{t}_k)=\nabla\omega^*(x_k^*-\hat{t}_k a_k)= \Pi_{H_k}^{x^*_k}(x_k).
    \end{equation}
\end{theorem}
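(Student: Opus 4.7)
The plan is to read every assertion off the KKT system \eqref{KKT} that has already been derived just above the theorem. That system characterizes a primal-dual optimum $(x,t)$ by the three conditions $x=\nabla\omega^*(x_k^*-ta_k)$, $a_k^Tx=\beta_k$, and $t>0$; since \eqref{P0} is a convex program with strongly convex objective and a single linear constraint, KKT is both necessary and sufficient and the primal solution is unique.

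Three of the four claims then follow quickly. The representation \eqref{Bproj} is just the first line of \eqref{KKT}. Strict positivity $\hat{t}_k>0$ was already obtained just above the theorem via $g'(0)=-\tfrac{1}{L}\|a_k\|^2<0$ on the dual objective $g(t)=\omega^*(x_k^*-ta_k)+\beta_k t$. And the converse assertion \eqref{Bproj0} is a KKT-sufficiency statement in disguise: any $\tilde{t}_k\in(0,\mu/L]$ for which $\tilde{x}:=\nabla\omega^*(x_k^*-\tilde{t}_k a_k)$ lies on the boundary hyperplane $\{x:a_k^Tx=\beta_k\}$ automatically satisfies every line of \eqref{KKT}, so by uniqueness of the primal solution $\tilde{x}=\Pi_{H_k}^{x_k^*}(x_k)=\nabla\omega^*(x_k^*-\hat{t}_k a_k)$.

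The main obstacle is the bound $\hat{t}_k\le\mu/L$. Because $g$ is convex in $t$ with $g'(0)<0$, this reduces to showing $g'(\mu/L)\ge0$, which is equivalent to
\[
 a_k^T\bigl(x_k-\nabla\omega^*(x_k^*-\tfrac{\mu}{L}a_k)\bigr)\;\ge\;\tfrac{1}{L}\|a_k\|^2.
\]
The delicate point is that the obvious chain---Cauchy-Schwarz followed by the $(1/\mu)$-Lipschitz continuity of $\nabla\omega^*$ from Lemma \ref{scLip}, applied with $\nabla\omega^*(x_k^*)=x_k$---produces the matching \emph{upper} bound $a_k^T(x_k-\nabla\omega^*(x_k^*-(\mu/L)a_k))\le\tfrac{1}{L}\|a_k\|^2$, i.e.\ the reverse direction. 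My plan is therefore to bypass that pointwise norm chain and instead combine the three-point identity \eqref{Bregdis1} with the primal-dual conjugacy \eqref{Bregdis2} of Lemma \ref{lemBreg}, applied at the triple $(x_k, x_\mu, x)$ with $x_\mu:=\nabla\omega^*(x_k^*-(\mu/L)a_k)$ and $x$ the Bregman projection itself, so that the cocoercivity of $\nabla\omega^*$ couples to the primal optimality $a_k^Tx=\beta_k$. I expect this to be the genuinely hard step, since the inequality is tight in the Euclidean case $\omega=\tfrac{1}{2}\|\cdot\|^2$ (where $\hat{t}_k=\mu/L$ exactly) and therefore admits no slack.
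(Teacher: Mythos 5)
Your handling of \eqref{Bproj}, of the positivity $\hat{t}_k>0$, and of the converse claim \eqref{Bproj0} is sound, and your route to \eqref{Bproj0} is actually cleaner than the paper's: the paper verifies the variational inequality \eqref{GPT1} directly via Cauchy--Schwarz and the $(1/\mu)$-Lipschitz continuity of $\nabla\omega^*$ (which is where it genuinely needs $\tilde{t}_k\le\mu/L$), whereas your KKT-sufficiency argument needs only $\tilde{t}_k>0$ together with membership in the hyperplane. The step you defer, the upper bound $\hat{t}_k\le\mu/L$, is exactly where the paper's proof also goes: it combines the stationarity identity \eqref{eqinn1} with the inequality \eqref{eqinn2}, which asserts $\langle \hat{t}_k a_k,\,\nabla\omega^*(x_k^*)-\nabla\omega^*(x_k^*-\hat{t}_k a_k)\rangle\ge\frac{1}{\mu}\|\hat{t}_k a_k\|^2$ ``by the Lipschitz continuity of $\nabla\omega^*$''. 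You correctly observe that Lipschitz continuity plus Cauchy--Schwarz yields precisely the \emph{reverse} of \eqref{eqinn2}; what \eqref{eqinn2} actually requires is $(1/\mu)$-strong monotonicity of $\nabla\omega^*$, i.e.\ strong convexity of $\omega^*$, which does not follow from strong convexity of $\omega$. So your diagnosis of the weak point is accurate, and it applies verbatim to the published argument.

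However, your proposed detour (three-point identity plus conjugacy plus cocoercivity) cannot close the gap, because the bound $\hat{t}_k\le\mu/L$ is false under the stated hypotheses. Take $n=1$, $\omega(x)=|x|+\tfrac12 x^2$ (so $\mu=1$ and $\nabla\omega^*=\mathcal{S}_1$), $f(x)=\tfrac12(x+\tfrac12)^2$ (so $L=1$), and initialize $x_0=\tfrac12$, $x_0^*=\tfrac32\in\partial\omega(x_0)$. Then $a_0=1$, $\beta_0=-\tfrac12$, $H_0=\{x:x\le-\tfrac12\}$, and the dual objective $g(t)=\tfrac12\mathcal{S}_1(\tfrac32-t)^2-\tfrac12 t$ has derivative $g'(t)=-\mathcal{S}_1(\tfrac32-t)-\tfrac12$, which is negative for every $t<3$; the unique dual solution is $\hat{t}_0=3>\mu/L=1$ (and indeed $\nabla\omega^*(\tfrac32-3)=-\tfrac12=\Pi_{H_0}^{x_0^*}(x_0)$, so \eqref{Bproj} still holds). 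The obstruction is structural: $\nabla\omega^*$ can be locally constant, so no lower bound of the form $\langle u-v,\nabla\omega^*(u)-\nabla\omega^*(v)\rangle\ge c\|u-v\|^2$ is available, and cocoercivity only gives the lower bound $\mu\|\nabla\omega^*(u)-\nabla\omega^*(v)\|^2$, which is not enough. A bound of this type does hold if $\omega$ is additionally $\nu$-gradient-Lipschitz (then $\omega^*$ is $(1/\nu)$-strongly convex and one gets $\hat{t}_k\le\nu/L$, recovering the Euclidean case $\hat{t}_k=\mu/L$ you mention); without such an assumption only the positivity $\hat{t}_k>0$, the formula \eqref{Bproj}, and the converse statement \eqref{Bproj0} survive, and the paper's closing assertion that $(0,\mu/L]$ is exactly the dual solution set fails as well.
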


\begin{proof}
  Since $\hat{t}_k$ is an exact solution to \eqref{D00}, by the first equation of the KKT condition \eqref{KKT} we deduce that $\nabla \omega^*(x_k^*-\hat{t}_k a_k)$ must be the unique primal solution to \eqref{P} and hence the desired result \eqref{Bproj} holds. In terms of the second equation of \eqref{KKT}, we have 
  $$a_k^T\nabla \omega^*(x_k^*-\hat{t}_k a_k)=\beta_k.$$
  Using the expressions of $a_k$ and $\beta_k$ and the fact that $x_k=\nabla \omega^*(x_k^*)$, we further get 
  \begin{equation}\label{eqinn1}
    \langle \nabla f(x_k), \nabla \omega^*(x_k^*)-\nabla \omega^*(x_k^*-\hat{t}_k\cdot\nabla f(x_k))\rangle=\frac{1}{L}\|\nabla f(x_k)\|^2.
  \end{equation}
  By the Lipschitz continuity of $\nabla \omega^*$, we can bound the left-hand side of \eqref{eqinn1} as follows:
    \begin{equation}\label{eqinn2}
    \langle \hat{t}_k\cdot\nabla f(x_k), \nabla \omega^*(x_k^*)-\nabla \omega^*(x_k^*-\hat{t}_k\cdot\nabla f(x_k))\rangle\geq \frac{1}{\mu}\|\hat{t}_k\cdot\nabla f(x_k)\|^2.
  \end{equation}
  Combining \eqref{eqinn1} and \eqref{eqinn2}, we get 
  $$\frac{\hat{t}_k}{L}\|\nabla f(x_k)\|^2\geq \frac{1}{\mu}\|\hat{t}_k\cdot\nabla f(x_k)\|^2,$$
  which leads to $0<\hat{t}_k\leq \frac{\mu}{L}$. Therefore, the proof of the first part is completed. 
  
 In order to show the second part, we introduce two auxiliary points:
   \begin{eqnarray*} 
\tilde{x}_{k+1} &= &\nabla \omega^*(x_k^*-a_k\tilde{t}_k), \\
\tilde{x}_{k+1}^*&=& x_k^*-a_k\tilde{t}_k. 
\end{eqnarray*} 
Then, $\tilde{x}_{k+1}^*\in \partial\omega(\tilde{x}_{k+1})$ and the point $\tilde{x}_{k+1}$ belongs to $H_k$ due to the assumption that $\tilde{x}_{k+1}$ lies in the hyperplane $\{x\in\RR^n: \langle \nabla f(x_k), x_k-x\rangle = \frac{1}{L}\|\nabla f(x_k)\|^2\}$.  By the uniqueness of the Bregman projection, it suffices to show that $\tilde{x}_{k+1}$ is the Bregman projection of $x_k$ onto the halfspace $H_k$. Based on Lemma \ref{gptlemm}, we only need to verify the following relationship: 
\begin{equation*}
\langle \tilde{x}_{k+1}^*-x_k^*, x-\tilde{x}_{k+1}\rangle \geq 0, ~~\forall x\in H_k.
\end{equation*}
Actually, we can derive that for any $x\in H_k$,
   \begin{eqnarray*} 
& &\langle \tilde{x}_{k+1}^*-x_k^*, x-\tilde{x}_{k+1}\rangle\\
&=&\langle -\tilde{t}_k\nabla f(x_k), x-\nabla \omega^*(x_k^*-a_k\tilde{t}_k)\rangle\\
&=&\tilde{t}_k(\langle  \nabla f(x_k),  \nabla \omega^*(x_k^*-a_k\tilde{t}_k)\rangle- \langle \nabla f(x_k), x \rangle)\\
&\geq& \tilde{t}_k(\langle  \nabla f(x_k),  \nabla \omega^*(x_k^*-a_k\tilde{t}_k)\rangle- \beta_k)\\
&=& -\tilde{t}_k\left(\langle  \nabla f(x_k),  \nabla\omega^*(x_k^*)-\nabla \omega^*(x_k^*-a_k\tilde{t}_k)\rangle- \frac{1}{L}\|\nabla f(x_k)\|^2\right)\\
&\geq& -\tilde{t}_k\left(\|\nabla f(x_k)\|\cdot\|\nabla\omega^*(x_k^*)-\nabla \omega^*(x_k^*-a_k\tilde{t}_k)\|- \frac{1}{L}\|\nabla f(x_k)\|^2\right)\\
 &\geq& -\tilde{t}_k\left(\frac{\tilde{t}_k}{\mu}\|\nabla f(x_k)\|^2- \frac{1}{L}\|\nabla f(x_k)\|^2\right)\geq0, 
\end{eqnarray*} 
where the first inequality follows from the condition $x\in H_k$, the second from the Cauchy-Schwartz inequality, the third from the Lipschitz continuity of $\nabla \omega^*$, and the last one is due to the condition $\tilde{t}_k\in (0,\frac{\mu}{L}]$. Thus, from the KKT condition \eqref{KKT}, we can see that $\tilde{t}_k$ must be a solution to the dual problem. Combining with the deduced result that any exact solutions must belong to $(0,\frac{\mu}{L}]$, we can further conclude that $(0,\frac{\mu}{L}]$ is just the set of dual solutions. This completes the proof. 
\end{proof}

\subsection{Iterate scheme and descent lemma}
Based on the previous study, we are ready to present our method concretely. The detailed algorithm is provided in Algorithm \ref{ITS}, where the main iterative scheme consists of two steps: updating the primal variable $x_{k+1}$ and the dual variable $x_{k+1}^*$. Here, we do not specify the step-size $t_k$, which can be obtained through various choices.  As shown in Theorem \ref{theo:projection}, when the step-size $t_k$ is chosen as an exact solution of the dual problem \eqref{D00}, the update rule of $x_{k+1}$ in Algorithm \ref{ITS} is strictly equivalent to the Bregman projection onto $H_{k}$, i.e.,  $x_{k+1}=\Pi_{H_k}^{x^*_k}(x_k)$.  It should be emphasized that achieving precise solutions $t_k$ necessitates a requirement for computing the Lipschtz constant $L$ of function $f$, which may be relatively costly in certain situations. Nevertheless, we will demonstrate that convergence results can be established across a range of step-size $t_k$. Moreover, we also give the numerical comparison of our algorithm with different step-size $t_k$ in the experiment.

\begin{algorithm}[htb]
   \caption{Bregman projection method based on cutting halfspaces}
   \label{ITS}
\begin{algorithmic}[1]
   \STATE {\bfseries Initialize:  } choose $x_0\in\RR^n$ and $x_0^*\in \partial \omega(x_0)$.
   \FOR{$k = 0,\cdots, $}
   \STATE Update the step-size $t_k$ via a certain rule. 
   \STATE Update the primal variable $x_{k+1}$:
   \begin{equation}
       x_{k+1} = \nabla \omega^*(x_k^*-t_k\cdot\nabla f(x_k)).
   \end{equation}
   \STATE Update the dual variable $x_{k+1}^*$:
   \begin{equation}\label{eq:iter:xkstar}
       x_{k+1}^*= x_k^*-t_k\cdot\nabla f(x_k).
   \end{equation}
   \ENDFOR
\end{algorithmic}
\end{algorithm}

Algorithm \ref{ITS} encompasses a broad range of existing algorithms through appropriate choices of functions $f$ and $\omega$. Specifically, when $f(x) = \frac{1}{2}\|Ax - b\|^2$ and $\omega(x) = \lambda \|x\|_1 + \frac{1}{2}\|x\|^2$, it reduces to the linearized Bregman iteration method, as evidenced by Corollary \ref{coro001}. Moreover, setting $f(x) = \frac{1}{2}\dist^2(Ax, Q)$, where $Q$ is a given closed convex set in $\mathbb{R}^m$, yields the iterative method proposed in \cite{lorenz2014linearized} (referenced as \eqref{Lbi01}). This illustrates the versatility of Algorithm \ref{ITS} in encompassing various optimization methods tailored to specific problem structures.

 The following lemma shows that Algorithm \ref{ITS} achieves a monotonic decrease in terms of  Bregman distance with different choices of the step-size $t_k$.  
 \begin{lemma}[Basic descent lemma]\label{lembasic}
Let $(x_k,x_k^*)$ be the $k$-iterate generated in Algorithm \ref{ITS}.  If the step-size $t_k$ is an exact solution to the dual problem \eqref{D00} or equals to $\frac{\mu}{L}$ exactly, then we have 
\begin{equation}\label{desc1}
 D_\omega^{x_{k+1}^*}(x,x_{k+1})\leq  D_\omega^{x_{k}^*}(x,x_{k})-\frac{\mu}{2L^2}\|\nabla f(x_k)\|^2, ~\forall x\in H_k.
\end{equation}
If the step-size $t_k$ lies in the interval $(0,\frac{2\mu}{L}]$, then we have 
\begin{equation} \label{desc2}
 D_\omega^{x_{k+1}^*}(x,x_{k+1})\leq  D_\omega^{x_{k}^*}(x,x_{k})-\left(\frac{t_k}{L}-\frac{t_k^2}{2\mu}\right)\|\nabla f(x_k)\|^2, ~\forall x\in H_k.
\end{equation}
\end{lemma}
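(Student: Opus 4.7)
The plan is to reduce both cases of the lemma to a single three-point identity computation, then handle the exact-projection case separately by exploiting the KKT hyperplane condition.

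First I would apply the three-point identity \eqref{Bregdis1} with $u=x$, $p=x_{k+1}$, $p^*=x_{k+1}^*$, $q=x_k$, $q^*=x_k^*$ to rewrite
\begin{equation*}
D_\omega^{x_{k+1}^*}(x,x_{k+1}) - D_\omega^{x_k^*}(x,x_k) = -D_\omega^{x_k^*}(x_{k+1},x_k) + \langle x_k^*-x_{k+1}^*,\,x-x_{k+1}\rangle.
\end{equation*}
Since the iteration gives $x_k^*-x_{k+1}^* = t_k a_k$ with $a_k=\nabla f(x_k)$, and since $x\in H_k$ means $\langle a_k,x\rangle\le \beta_k = \langle a_k,x_k\rangle - \tfrac{1}{L}\|a_k\|^2$, the inner product term is upper bounded by $t_k\langle a_k, x_k-x_{k+1}\rangle - \tfrac{t_k}{L}\|a_k\|^2$.

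For the general step-size case \eqref{desc2}, I would combine Cauchy--Schwarz with Young's inequality in the form $t_k\|a_k\|\cdot\|x_{k+1}-x_k\| \le \tfrac{t_k^2}{2\mu}\|a_k\|^2 + \tfrac{\mu}{2}\|x_{k+1}-x_k\|^2$, and then absorb the quadratic term using the strong-convexity lower bound $D_\omega^{x_k^*}(x_{k+1},x_k) \ge \tfrac{\mu}{2}\|x_{k+1}-x_k\|^2$ from \eqref{Bregdis3}. This exactly cancels the $-D_\omega^{x_k^*}(x_{k+1},x_k)$ term, leaving the claimed coefficient $-(t_k/L - t_k^2/(2\mu))$. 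Specializing $t_k=\mu/L$ yields the constant $\mu/(2L^2)$, which covers one half of \eqref{desc1}.

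For the remaining part of \eqref{desc1}, namely when $t_k$ is an exact dual solution, I would instead invoke Theorem \ref{theo:projection} to conclude that $x_{k+1}=\Pi_{H_k}^{x_k^*}(x_k)$ and then use the generalized projection inequality \eqref{GPT2} directly:
\begin{equation*}
D_\omega^{x_{k+1}^*}(x,x_{k+1}) \le D_\omega^{x_k^*}(x,x_k) - D_\omega^{x_k^*}(x_{k+1},x_k).
\end{equation*}
The key extra ingredient here is the boundary condition from the KKT system \eqref{KKT}: $\langle a_k,x_{k+1}\rangle = \beta_k$, which forces $\langle a_k, x_k - x_{k+1}\rangle = \tfrac{1}{L}\|a_k\|^2$. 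Cauchy--Schwarz then yields $\|x_k-x_{k+1}\|\ge \tfrac{1}{L}\|a_k\|$, and feeding this into \eqref{Bregdis3} gives $D_\omega^{x_k^*}(x_{k+1},x_k)\ge \tfrac{\mu}{2L^2}\|a_k\|^2$, as required.

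The only mildly delicate step is the Young's-inequality bookkeeping in paragraph two: one must choose the split of $t_k\|a_k\|\cdot\|x_{k+1}-x_k\|$ so that the strong-convexity lower bound on $D_\omega^{x_k^*}(x_{k+1},x_k)$ exactly absorbs the $\tfrac{\mu}{2}\|x_{k+1}-x_k\|^2$ piece without leaving any residue. Everything else is routine substitution using the update rule and the definitions of $a_k$, $\beta_k$, and $H_k$.
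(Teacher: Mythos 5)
Your proof is correct, but it follows a genuinely different route from the paper's. The paper works in the dual space throughout: it rewrites the Bregman distance via the conjugate, $D_\omega^{x^*}(y,x)=\omega^*(x^*)-\langle x^*,y\rangle+\omega(y)$, bounds $\omega^*(x_k^*-t\nabla f(x_k))$ by the smoothness (descent) inequality for $\omega^*$, and in the exact-step case exploits only the fact that $t_k$ minimizes the dual objective $\omega^*(x_k^*-ta_k)+\beta_k t$, so that one may take the infimum over all $t>0$ of $\bigl(\tfrac{t^2}{2\mu}-\tfrac{t}{L}\bigr)\|\nabla f(x_k)\|^2$ to land on $-\tfrac{\mu}{2L^2}\|\nabla f(x_k)\|^2$; the general-step bound \eqref{desc2} is the same computation with $t=t_k$ fixed. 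You instead stay primal: the three-point identity \eqref{Bregdis1}, the halfspace inequality, Young's inequality, and the strong-convexity bound \eqref{Bregdis3} give \eqref{desc2} (indeed for every $t_k>0$, the interval $(0,\tfrac{2\mu}{L}]$ only making the decrease nonnegative), and for the exact dual step you pass through Theorem \ref{theo:projection} and the projection inequality \eqref{GPT2} combined with the KKT boundary condition $\langle a_k,x_{k+1}\rangle=\beta_k$ and Cauchy--Schwarz. Your exact-step argument is geometrically more transparent and in fact yields the slightly stronger intermediate bound $D_\omega^{x_{k+1}^*}(x,x_{k+1})\le D_\omega^{x_k^*}(x,x_k)-D_\omega^{x_k^*}(x_{k+1},x_k)$, while the paper's dual argument avoids the projection machinery and obtains the optimal constant directly by minimizing over $t$. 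One small point you should make explicit: \eqref{GPT2} is stated for \emph{some} admissible subgradient $z^*\in\partial\omega(z)$, so you need to check that the specific dual iterate $x_{k+1}^*=x_k^*-\hat t_k a_k$ is admissible; this is immediate from the KKT conditions, since for any $y\in H_k$ one has $\langle x_{k+1}^*-x_k^*,y-x_{k+1}\rangle=\hat t_k\bigl(\beta_k-\langle a_k,y\rangle\bigr)\ge 0$, which is \eqref{GPT1}, but it should be said rather than assumed.
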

 \begin{proof}
 First of all, let us show \eqref{desc1}, where the iterate sequences $\{x_k\}$ and $\{x_k^*\}$ are generated by Algorithm \ref{ITS} with $t_k=\hat{t}_k$ being the exact solution to the dual problem \eqref{D00}. From Lemma \ref{scLip} and the definition of Bregman distance, we can express the Bregman distance in the following form
   $$D_\omega^{x^*}(y,x)=\omega^*(x^*)-\langle x^*, y\rangle +\omega(y), ~\forall x^*\in \partial \omega (x).$$
   Using this expression and the updated formulation \eqref{eq:iter:xkstar} of $x_{k+1}^*$, we derive that 
      \begin{eqnarray*} 
D_\omega^{x_{k+1}^*}(x,x_{k+1})&=& \omega^*(x_{k+1}^*)-\langle x_{k+1}^*, x\rangle +\omega(x)\\
&=&\omega^*(x_k^*-t_k\cdot\nabla f(x_k))-\langle x_k^*, x\rangle +\langle t_k\cdot\nabla f(x_k), x\rangle +\omega(x).
\end{eqnarray*} 
    By the definition of $H_k$, we know that if $x\in H_k$, then 
    $\langle \nabla f(x_k),x\rangle \leq \beta_k.$
    Thus, combining with the Lipschitz continuity of $\nabla \omega^*$ and the fact that $t_k$ is the minimizer to the dual objective function $\omega^*(x_k^*-t a_k)+\beta_k t$, we can further derive that for any $x\in H_k$ and $t>0$,
\begin{eqnarray*} 
&&D_\omega^{x_{k+1}^*}(x,x_{k+1})\\
&\leq & \omega^*(x_k^*-t_k\cdot\nabla f(x_k)) +t_k\cdot\beta_k -\langle x_k^*, x\rangle+\omega(x)\\
&\leq & \omega^*(x_k^*-t\cdot\nabla f(x_k)) +t\cdot\beta_k-\langle x_k^*, x\rangle +\omega(x)\\
&\leq & \omega^*(x_k^*)-\langle \nabla\omega^*(x_k^*), t\nabla f(x_k)\rangle +\frac{t^2}{2\mu}\|\nabla f(x_k)\|^2 +t\cdot\beta_k-\langle x_k^*, x\rangle +\omega(x)\\
&= & \omega^*(x_k^*)-t\langle x_k, \nabla f(x_k)\rangle +\frac{t^2}{2\mu}\|\nabla f(x_k)\|^2 +t\cdot\beta_k-\langle x_k^*, x\rangle +\omega(x)\\
&= & \omega^*(x_k^*)-\langle x_k^*, x\rangle +\omega(x)+\left(\frac{t^2}{2\mu}-\frac{t}{L}\right)\|\nabla f(x_k)\|^2 \\
&= & D_\omega^{x_{k}^*}(x,x_{k})+\left(\frac{t^2}{2\mu}-\frac{t}{L}\right)\|\nabla f(x_k)\|^2, 
\end{eqnarray*} 
where the expression of $\beta_k$ and the fact that $x_k=\nabla\omega^*(x_k^*)$ have been used. Therefore, for any $x\in H_k$ we have that 
\begin{eqnarray*} 
D_\omega^{x_{k+1}^*}(x,x_{k+1})
&\leq & D_\omega^{x_{k}^*}(x,x_{k})+\inf_{t>0}\left(\frac{t^2}{2\mu}-\frac{t}{L}\right)\|\nabla f(x_k)\|^2\\
&=&D_\omega^{x_{k}^*}(x,x_{k})-\frac{\mu}{2L^2}\|\nabla f(x_k)\|^2.
\end{eqnarray*}
Note that the descent property can also be obtained with $t_k=\frac{\mu}{L}$. Hence, the proof of the first part has been done. The second part follows by observing that when $t\in (0,\frac{2\mu}{L}]$, the term $\left(\frac{t}{L}-\frac{t^2}{2\mu}\right)$ is nonnegative so that \eqref{desc2} is always a descent property. Thus, we complete the whole proof. 
 \end{proof}
 
\section{Convergence Analysis}\label{se5}
\subsection{Convergence to a feasible point}
\begin{lemma}\label{lemmon}
Let $(x_k,x_k^*)$ be the $k$-iterate generated in Algorithm \ref{ITS}. 
 Assume that the sequence $\{x_k\}$ converges to a point $x$. If for a given point $y$ and a given subgradient sequence $\{x_k^*\}$ satisfying $x_k^*\in \partial \omega(x_k)$, the sequence $\{D_\omega^{x_k^*}(y,x_k)\}$ converges, then there must exist a point $x^*$ in $\partial \omega(x)$ such that
\begin{equation}
\lim_{k\rightarrow \infty}D_\omega^{x_k^*}(y,x_k)=D_\omega^{x^*}(y,x).
\end{equation}
\end{lemma}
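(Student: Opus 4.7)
The plan is to extract a cluster point of the subgradient sequence $\{x_k^*\}$, identify it as an element of $\partial\omega(x)$, and then pass to the limit inside the Bregman distance formula $D_\omega^{x_k^*}(y,x_k)=\omega(y)-\omega(x_k)-\langle x_k^*,\,y-x_k\rangle$. Since $\omega$ is strongly convex and finite-valued on $\RR^n$, it is continuous, so $\omega(x_k)\to\omega(x)$, and the hard quantity to control is the term $\langle x_k^*, y-x_k\rangle$. Once $x_k^*$ is shown to cluster at some $x^*\in\partial\omega(x)$, the continuity of the inner product and of $\omega$ will let us read off the desired identification along a subsequence; the assumed convergence of the whole sequence $\{D_\omega^{x_k^*}(y,x_k)\}$ then upgrades this to the full limit.

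The crux is boundedness of $\{x_k^*\}$. The plan is to invoke the classical fact that a convex function which is finite on all of $\RR^n$ has a subdifferential mapping that is locally bounded at every point of $\mathrm{int}\,\dom\omega=\RR^n$. Concretely, since $x_k\to x$, there is $\varepsilon>0$ and $K$ such that $x_k\in B(x,\varepsilon)$ for $k\geq K$, and one can bound $\|x_k^*\|$ by combining the subgradient inequality $\omega(z)\geq\omega(x_k)+\langle x_k^*,z-x_k\rangle$ with the choice $z=x_k+x_k^*/\|x_k^*\|$; the right-hand side stays bounded because $\omega$ is bounded on the compact set $\overline{B(x,\varepsilon+1)}$. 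Thus $\{x_k^*\}$ is bounded, and by Bolzano--Weierstrass some subsequence $x_{k_j}^*\to x^*$.

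Next, I would argue that $x^*\in\partial\omega(x)$ by passing to the limit in the subgradient inequality $\omega(z)\geq\omega(x_{k_j})+\langle x_{k_j}^*,z-x_{k_j}\rangle$ for every $z\in\RR^n$, using continuity of $\omega$ and the subsequential convergence $x_{k_j}\to x$, $x_{k_j}^*\to x^*$. Plugging these limits into $D_\omega^{x_{k_j}^*}(y,x_{k_j})=\omega(y)-\omega(x_{k_j})-\langle x_{k_j}^*,y-x_{k_j}\rangle$ produces $D_\omega^{x_{k_j}^*}(y,x_{k_j})\to\omega(y)-\omega(x)-\langle x^*,y-x\rangle=D_\omega^{x^*}(y,x)$.

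Finally, by hypothesis the full sequence $\{D_\omega^{x_k^*}(y,x_k)\}$ converges, so its value must coincide with the subsequential limit just computed, giving $\lim_k D_\omega^{x_k^*}(y,x_k)=D_\omega^{x^*}(y,x)$. The main obstacle I anticipate is the boundedness step: one needs to make sure that a clean statement of local boundedness of $\partial\omega$ is available (or is supplied via the subgradient-inequality trick above) without dragging in machinery beyond what the paper already uses. Everything else is continuity and a subsequence argument.
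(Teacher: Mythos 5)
Your proof is correct and follows essentially the same route as the paper: boundedness of $\{x_k^*\}$, extraction of a convergent subsequence, identification of its limit $x^*$ as an element of $\partial\omega(x)$, passage to the limit in the formula $D_\omega^{x_k^*}(y,x_k)=\omega(y)-\omega(x_k)-\langle x_k^*,y-x_k\rangle$ using continuity of the real-valued convex function $\omega$, and finally the assumed convergence of the full sequence to upgrade the subsequential limit. The only differences are in two sub-steps and are harmless: you establish boundedness of $\{x_k^*\}$ directly via the subgradient-inequality trick where the paper cites a known local-boundedness theorem, and you verify $x^*\in\partial\omega(x)$ by taking limits in the subgradient inequality (closedness of the subdifferential graph) where the paper instead uses the $\tfrac{1}{\mu}$-Lipschitz continuity of $\nabla\omega^*$ to show $x=\nabla\omega^*(x^*)$; both variants are valid.
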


\begin{proof}
First of all, the convergence of $\{x_k\}$ implies its boundedness, which further implies the boundedness of the subgradient sequence $\{x_k^*\}$ by invoking Theorem \cite[Theorem 3.16]{beck2017first}. Hence, there exists a convergent subsequence $\{x_{k_l}^*\}$ whose limit point is denoted by $x^*$. Since $\{x_k\}$ converges to the point $x$, its any subsequence, including the subsequence $\{x_{k_l}\}$, must converge to $x$ as well. The function $\omega$ must be continuous by noting that it is a real-valued convex function. Based on these facts, we derive that
\begin{eqnarray*}
\lim_{k\rightarrow \infty}D_\omega^{x_k^*}(y,x_k) &= & \lim_{l\rightarrow \infty}D_\omega^{x_{k_l}^*}(y,x_{k_l})   \\
  &= &\lim_{l\rightarrow \infty}(\omega(y)-\omega(x_{k_l})-\langle x_{k_l}^*, y-x_{k_l}\rangle) \\
&= &\omega(y)-\omega(x)-\langle x^*, y-x\rangle.
\end{eqnarray*}
It remains to show that $x^*\in \partial \omega(x)$, or equivalently, $x=\nabla \omega^*(x^*)$. In fact, noting that $x^*_{k_l}\in \partial \omega(x_{k_l})$, we derive that
\begin{eqnarray*}
\|\nabla\omega^*(x^*)-x \|&= & \lim_{l\rightarrow \infty}\|\nabla\omega^*(x^*)-x_{k_l}\| \\
  &= &\lim_{l\rightarrow \infty}\|\nabla\omega^*(x^*)-\nabla\omega^*(x_{k_l}^*)\| \\
&\leq &\lim_{l\rightarrow \infty}\frac{1}{\mu}\|x^*-x_{k_l}^*\|=0,
\end{eqnarray*}
where the inequality follows from the gradient-Lipschitz-continuity of $\omega^*$ stated in Lemma \ref{scLip}.
\end{proof}

\begin{theorem}[Convergence to an inner-level solution]\label{convinner}
Let $\{x_k\}$  be generalized by Algorithm \ref{ITS} with $t_k$ being an exact solution to the dual problem \eqref{D00} or a constant in $(0,\frac{2\mu}{L})$. Then, there exists a point $x$ in $\overline{X}$ such that $x_k\rightarrow x$ as $k\rightarrow +\infty$.
\end{theorem}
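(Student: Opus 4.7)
The plan is to leverage the descent lemma (Lemma \ref{lembasic}) applied with any fixed $\bar x \in \overline X$ as the test point, then use the strong convexity of $\omega$ via \eqref{Bregdis3} to promote Bregman-distance convergence to iterate convergence. The key observation is that by the Baillon-Haddad construction, $\overline X \subset H_k$ for every $k$, so any $\bar x \in \overline X$ is admissible in \eqref{desc1}-\eqref{desc2}. Under either step-size regime, there is a constant $c > 0$ (equal to $\mu/(2L^2)$ in the exact case, or $t_k/L - t_k^2/(2\mu) > 0$ uniformly over $k$ in the constant case) such that
\begin{equation*}
D_\omega^{x_{k+1}^*}(\bar x, x_{k+1}) \leq D_\omega^{x_k^*}(\bar x, x_k) - c \|\nabla f(x_k)\|^2.
\end{equation*}

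From this I would deduce three consequences in sequence. First, the sequence $\{D_\omega^{x_k^*}(\bar x, x_k)\}$ is nonincreasing and bounded below by $0$, hence convergent; telescoping gives $\sum_k \|\nabla f(x_k)\|^2 < \infty$, so $\nabla f(x_k) \to 0$. Second, by \eqref{Bregdis3} we have $\tfrac{\mu}{2}\|x_k - \bar x\|^2 \leq D_\omega^{x_k^*}(\bar x, x_k) \leq D_\omega^{x_0^*}(\bar x, x_0)$, so $\{x_k\}$ is bounded and we may extract a convergent subsequence $x_{k_l} \to x^\infty$. Third, continuity of $\nabla f$ together with $\nabla f(x_k) \to 0$ yields $\nabla f(x^\infty) = 0$, i.e., $x^\infty \in \overline X$.

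The subtle part, and the main obstacle, is upgrading subsequential convergence to full-sequence convergence, since Algorithm \ref{ITS} generates a dual-space iteration whose primal projection $\nabla \omega^*$ is nonlinear. My plan is to re-apply the descent inequality with the specific test point $\bar x = x^\infty$ (now that we know it lies in $\overline X \subset H_k$). This makes the full sequence $\{D_\omega^{x_k^*}(x^\infty, x_k)\}$ monotonically nonincreasing, hence convergent. Along the subsequence $x_{k_l} \to x^\infty$, Lemma \ref{lemmon} (with $y = x^\infty$) supplies some $x_\infty^* \in \partial \omega(x^\infty)$ with
\begin{equation*}
\lim_{l\to\infty} D_\omega^{x_{k_l}^*}(x^\infty, x_{k_l}) = D_\omega^{x_\infty^*}(x^\infty, x^\infty) = 0.
\end{equation*}
Since the whole monotone sequence agrees with its subsequential limit, $D_\omega^{x_k^*}(x^\infty, x_k) \to 0$.

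Finally, a second use of \eqref{Bregdis3} converts this into $\|x_k - x^\infty\|^2 \leq \tfrac{2}{\mu} D_\omega^{x_k^*}(x^\infty, x_k) \to 0$, concluding $x_k \to x^\infty \in \overline X$. The only bookkeeping to take care of is verifying that in the constant step-size case $t_k \equiv t \in (0, 2\mu/L)$ the coefficient $t/L - t^2/(2\mu)$ is strictly positive (which is automatic since $t < 2\mu/L$), ensuring the summability argument goes through; in the exact step-size case the uniform constant $\mu/(2L^2)$ is supplied directly by \eqref{desc1}.
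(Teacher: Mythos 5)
Your proposal is correct and follows essentially the same route as the paper: the basic descent lemma with an anchor in $\overline{X}$, strong convexity for boundedness, $\nabla f(x_k)\to 0$ to place cluster points in $\overline{X}$, and Lemma \ref{lemmon} to pass to the limit in the Bregman distance. The only (harmless) difference is in the finish: you re-anchor the monotone Bregman-distance sequence at the cluster point $x^\infty$ itself and drive $D_\omega^{x_k^*}(x^\infty,x_k)\to 0$, while the paper instead takes two convergent subsequences and shows their limits coincide by setting $x=\hat{x}$ in the equality of limiting distances; both are standard Fej\'er-type conclusions of the same argument.
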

\begin{proof}
  By Lemma \ref{lembasic}, for any fixed $x\in \overline{X}$ we have that $\{D_\omega^{x^*_k}(x,x_k)\}$ monotonically decreases and hence must be a convergence sequence. Combining with the strong convexity of $\omega$ yields that
  $$\frac{\mu}{2}\|x-x_k\|^2\leq D_\omega^{x^*_k}(x,x_k) \leq D_\omega^{x^*_0}(x,x_0),$$
  which means that $\{x_k\}$ is bounded. It suffices to show that it has a unique cluster point. To this end, let $\{x_{k_i}\}$ and $\{x_{k_j}\}$ be two subsequences of $\{x_k\}$ such that $x_{k_i}\rightarrow \hat{x}$ and $x_{k_j}\rightarrow \tilde{x}$; it reduces to show that $\hat{x}=\tilde{x}$. In fact, invoking Lemma \ref{lemmon}, we have that
  $$\lim_{i\rightarrow \infty}D_\omega^{x_{k_i}^*}(x,x_{k_i})=D_\omega^{\hat{x}^*}(x,\hat{x})$$
  and 
    $$\lim_{j\rightarrow \infty}D_\omega^{x_{k_j}^*}(x,x_{k_j})=D_\omega^{\tilde{x}^*}(x,\tilde{x}).$$
    Due to the convergence of $\{D_\omega^{x^*_k}(x,x_k)\}$, we immediately get 
   
   \begin{equation}\label{eq001}
   D_\omega^{\hat{x}^*}(x,\hat{x})=D_\omega^{\tilde{x}^*}(x,\tilde{x}).
   \end{equation}
On the other hand, from Lemma \ref{lembasic} we can get 
$$\frac{\mu}{2L^2}\|\nabla f(x_k)\|^2 \leq  D_\omega^{x_{k}^*}(x,x_{k})- D_\omega^{x_{k+1}^*}(x,x_{k+1}).$$
Using the convergence of $\{D_\omega^{x^*_k}(x,x_k)\}$  and letting $k$ tend to $\infty$, we get $\lim_{k\rightarrow +\infty}\|\nabla f(x_k)\|=0$ and hence $\|\nabla f(\hat{x})\|=\lim_{i\rightarrow +\infty}\|\nabla f(x_{k_i})\|=0$ and $ \|\nabla f(\tilde{x})\|=\lim_{j\rightarrow +\infty}\|\nabla f(x_{k_j})\|=0$. Thus, both $\hat{x}$ and $\tilde{x}$ belong to $\overline{X}$. Now, we can take $x=\hat{x}$ in \eqref{eq001} to get $D_\omega^{\tilde{x}^*}(\hat{x},\tilde{x})=0$ which implies that $\hat{x}=\tilde{x}$ by the strong convexity of $\omega$. This completes the proof. 
\end{proof}

\subsection{Convergence to the optimal solution}

\begin{theorem}[Convergence to the optimal solution]\label{mainth1}
 Assume that $f$ is in the form of $f(x)=g(Ax)$, where $A\in\RR^{m\times n}$ is a nonzero matrix and $g:\RR^m\rightarrow \RR$ is a differentiable convex function satisfying one of the following conditions:
\begin{enumerate}
  \item[(i)] $A$ is surjective and $g$ has a unique minimizer;
  \item[(ii)] $g$ is a strictly convex function.
\end{enumerate}
Let $\{x_k\}$  be generalized by Algorithm \ref{ITS} with $t_k$ being an exact solution to the dual problem \eqref{D00} or a constant in $(0,\frac{2\mu}{L})$, and with initial points $x_0$ and $x_0^*$ satisfying $x_0^*\in \partial \omega(x_0)\bigcap \mathcal{R}(A^T).$
 Then, the sequence $\{x_k\}$  converges to the optimal solution $\bar{x}$ of the bilevel optimization problem \eqref{BilevelOpt}.
\end{theorem}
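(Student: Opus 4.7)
The plan is to invoke Theorem \ref{convinner} to conclude that $x_k$ converges to some $x^\infty \in \overline{X}$, and then identify $x^\infty$ with $\bar{x}$ via the KKT conditions for minimizing $\omega$ on $\overline{X}$. Two preparatory observations drive the argument. First, under either hypothesis the inner-level solution set reduces to an affine subspace of the form $\overline{X} = \{x \in \RR^n : Ax = c\}$ for some $c \in \RR^m$. In case (i), $c$ is the unique minimizer of $g$, and surjectivity of $A$ guarantees that $\{x : Ax = c\}$ is nonempty and coincides with $\overline{X}$. In case (ii), if one had $x_1, x_2 \in \overline{X}$ with $Ax_1 \neq Ax_2$, then strict convexity of $g$ at the midpoint $A((x_1+x_2)/2) \in \mathcal{R}(A)$ would produce a value of $g$ on $\mathcal{R}(A)$ strictly smaller than the common minimum $g(Ax_1) = g(Ax_2)$, a contradiction; hence every element of $\overline{X}$ shares a common image $c \in \mathcal{R}(A)$ and again $\overline{X} = \{x : Ax = c\}$. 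Second, a simple induction on the update $x_{k+1}^* = x_k^* - t_k A^T \nabla g(Ax_k)$, seeded by the hypothesis $x_0^* \in \mathcal{R}(A^T)$, shows that $x_k^* \in \mathcal{R}(A^T)$ for every $k$.

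Next I would pass to a limit in the dual variable. By Lemma \ref{lembasic} applied to any $y \in \overline{X} \subseteq H_k$, the sequence $\{D_\omega^{x_k^*}(y, x_k)\}$ is nonnegative and nonincreasing, hence convergent. Since Theorem \ref{convinner} gives $x_k \to x^\infty$, Lemma \ref{lemmon} then furnishes some $x^{\infty,*} \in \partial \omega(x^\infty)$ realized as the limit of a subsequence $x_{k_l}^* \to x^{\infty,*}$. Because $\mathcal{R}(A^T)$ is a closed linear subspace, this subsequential limit inherits membership $x^{\infty,*} \in \mathcal{R}(A^T)$ from the preceding induction.

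At this point $x^\infty$ satisfies both $Ax^\infty = c$ and $\partial \omega(x^\infty) \cap \mathcal{R}(A^T) \neq \emptyset$, which are exactly the KKT conditions for the convex program $\min\{\omega(x) : Ax = c\}$, i.e.\ the outer-level problem \eqref{outopt}. Strong convexity of $\omega$ makes such a KKT point unique, so $x^\infty = \bar{x}$ and the proof is complete. The subtlety I expect to be the main obstacle is verifying that the limit $x^{\infty,*}$ actually lies in $\partial \omega(x^\infty)$: boundedness of $\{x_k^*\}$ and closedness of $\mathcal{R}(A^T)$ are routine, but certifying the subgradient property at a point where $\omega$ may fail to be differentiable requires Lemma \ref{lemmon} (or, equivalently, the conjugacy identity $x_k = \nabla \omega^*(x_k^*)$) rather than any naive continuity statement about $\partial \omega$.
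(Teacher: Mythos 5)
Your proposal is correct, and it shares the paper's skeleton (Theorem \ref{convinner} gives $x_k\to x^\infty\in\overline{X}$; under (i) or (ii) the map $x\mapsto Ax$ is constant on $\overline{X}$; the dual updates keep $x_k^*\in\mathcal{R}(A^T)$; dual iterates are bounded), but it finishes differently. The paper never passes to a limit in the dual variable: it writes $x_k^*=A^Tz_{k-1}^2$ with $z_{k-1}^2\in\mathcal{R}(A)$, uses injectivity of $A^T$ on $\mathcal{R}(A)$ to get a uniform bound $\|z_k^2\|\le C$, and then the subgradient inequality yields $\omega(x_k)\le\omega(\bar{x})+C\|A\bar{x}-Ax_k\|$, so that $\omega(x^\infty)\le\omega(\bar{x})$ in the limit and uniqueness of $\bar{x}$ concludes. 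You instead extract a convergent subsequence $x_{k_l}^*\to x^{\infty,*}$, certify $x^{\infty,*}\in\partial\omega(x^\infty)\cap\mathcal{R}(A^T)$ via the conjugacy identity $x_k=\nabla\omega^*(x_k^*)$ and closedness of $\mathcal{R}(A^T)$, and invoke sufficiency of the Lagrangian/KKT conditions for $\min\{\omega(x):Ax=c\}$ (which needs no constraint qualification: writing $x^{\infty,*}=A^T\lambda$ gives $\omega(y)\ge\omega(x^\infty)+\langle\lambda,A(y-x^\infty)\rangle=\omega(x^\infty)$ for all feasible $y$). Your ending is arguably cleaner since it avoids the explicit decomposition $z_k=z_k^1+z_k^2$ and the constant $C$, while the paper's ending gives the quantitative estimate $\omega(x_k)\le\omega(\bar{x})+C\|A\bar{x}-Ax_k\|$ along the whole sequence; your case (ii) argument (strict convexity tested at the midpoint) is also slightly more elementary than the paper's gradient-monotonicity contradiction. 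One small point to make explicit in a write-up: the statement of Lemma \ref{lemmon} only asserts existence of some admissible $x^*\in\partial\omega(x^\infty)$, not that it arises as a subsequential limit of $\{x_k^*\}$, so to place $x^{\infty,*}$ in $\mathcal{R}(A^T)$ you should argue directly as you indicate — boundedness of $\{x_k^*\}$ (subgradients of the real-valued convex $\omega$ on a bounded set), a convergent subsequence, and Lipschitz continuity of $\nabla\omega^*$ — rather than cite the lemma's conclusion alone; with that phrasing the proof is complete.
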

\begin{proof}
  By Theorem \ref{convinner}, we know that there exists $\tilde{x}\in\overline{X}$ such that $x_k\rightarrow \tilde{x}$ as $k\rightarrow 
  \infty$. Hence, $\{x_k\}$ must be a bounded sequence, and so is the subgradient sequence $\{x_k^*\}$ by \cite[Theorem 3.16]{beck2017first} Moreover, based on the iterative scheme of $x_k^*$, we deduce that 
  $$x_{k+1}^*=x_k^*-t_k\nabla f(x_k)=\cdots=x_0^*-\sum_{i=0}^{k}t_i\nabla f(x_i).$$
  Since $x_0^*\in \partial \omega(x_0)\bigcap \mathcal{R}(A^T)$, there exists a vector $y_0\in\RR^m$ such that 
  $x_0^*=A^Ty_0$. Due to the expression of $f(x)=g(Ax)$, we have $\nabla f(x)=A^T\nabla g(Ax)$. Therefore, 
  $$x_{k+1}^*=A^Ty_0-\sum_{i=0}^{k}t_iA^T\nabla g(Ax_i)=A^T(y_0-\sum_{i=0}^{k}t_i\nabla g(Ax_i)).$$
  Let $z_k=y_0-\sum_{i=0}^{k}t_i\nabla g(Ax_i)$ and decompose it into the direct sum of $z_k=z_k^1+z_k^2$ with $z_k^1\in \mathcal{K}(A^T)$ and $z_k^2\in \mathcal{R}(A)$. Then, we have 
  $$x_{k+1}^*=A^Tz_k=A^T(z_k^1+z_k^2)=A^Tz_k^2.$$
  Note that $\{x_k^*\}$ is a bounded sequence and $A^T$ is one-to-one from $\mathcal{A}$ to $\RR^n$. The sequence $\{z_k^2\}$ must be bounded, that is, there exists a constant $C>0$ such that 
  $$\|z_k^2\|\leq C, ~~\forall~k\geq 0.$$
  Now, using the subgradient inequality and Cauchy-Schwartz inequality, we derive that for $k\geq 1$,
  \begin{eqnarray} \label{lin1}
\omega (x_k)&\leq & \omega(\bar{x})-\langle A^Tz_{k-1}^2, \bar{x}-x_k\rangle \nonumber\\
&=&\omega(\bar{x})-\langle z_{k-1}^2, A\bar{x}-Ax_k\rangle \nonumber\\
&\leq&\omega(\bar{x})+\|z_{k-1}^2\|\cdot\|A\bar{x}-Ax_k\|\nonumber\\
&\leq&\omega(\bar{x})+C\cdot\|A\bar{x}-Ax_k\|.   
\end{eqnarray}
In what follows, under the two settings of $g$ and $A$, we will show that 
\begin{equation}\label{remain}
A\hat{x}=A\bar{x}, ~~\forall~ \hat{x}\in\overline{X},
\end{equation}
which immediately implies that 
$$\|A\bar{x}-Ax_k\|=\|A\tilde{x}-Ax_k\|\leq \|A\|\cdot\|\tilde{x}-x_k\|\rightarrow 0,$$
as $k\rightarrow \infty$ since  $x_k\rightarrow \tilde{x}$ as $k\rightarrow \infty$. Together with \eqref{lin1}, we get 
$\omega(\tilde{x})=\lim_{k\rightarrow \infty}\omega(x_k)\leq \omega(\bar{x}),$ which means that $\tilde{x}$ is also a minimizer of the bilevel optimization problem \eqref{BilevelOpt} and hence equals to $\bar{x}$ by the uniqueness of solution.  In other words, the sequence $\{x_k\}$  converges to the optimal solution $\bar{x}$ of the bilevel optimization problem \eqref{BilevelOpt}. Therefore, it remains to show \eqref{remain}. In the case (i), we denote the unique minimizer of $g$ by $\bar{y}$. For any $\hat{x}\in\overline{X}$, we have 
$$0=\nabla f(\hat{x})=A^T\nabla g(A\hat{x}),$$
which implies that $\nabla g(A\hat{x})=0$ due to the fact of $A$ being surjective. Thus, $A\hat{x}$ is a global minimizer of $g$ due to the convexity of $g$ and hence it must equal to the unique minimizer $\bar{y}$, that is, $A\hat{x}=\bar{y}$. Therefore, the result \eqref{remain} holds in the first case. In the case (ii), we will use the following property implied by the strict convexity of $g$:
\begin{equation}\label{stc}
\langle \nabla g(u)-\nabla g(v), u-v\rangle >0, \forall u\neq v.
\end{equation}
Suppose that there exists $\hat{x}\in\overline{X}$ such that $A\hat{x}\neq A\bar{x}$; then by \eqref{stc}, we have 
$$\langle \nabla g(A\hat{x})-\nabla g(A\bar{x}), A\hat{x}-A\bar{x}\rangle >0.$$
On the other hand, we have 
$$A^T\nabla g(A\hat{x})=A^T\nabla g(A\bar{x})=0,$$
and hence 
\begin{eqnarray}  
&&\langle \nabla g(A\hat{x})-\nabla g(A\bar{x}), A\hat{x}-A\bar{x}\rangle\nonumber\\
&=&\langle A^T\nabla g(A\hat{x})-A^T\nabla g(A\bar{x}), \hat{x}-\bar{x}\rangle=0 \nonumber.
\end{eqnarray}
It is a contradiction.  This completes the proof.  
\end{proof}
Condition (i) in Theorem \ref{mainth1} is weaker than the corresponding assumption on $A$ and $g$ in Theorem 4.1 in \cite{zhang2023revisiting} since the growth property there must imply the uniqueness of the minimizer of $g$. Regarding condition (ii), it is general enough to apply to the first example in Section 3 because the inner-level objective function $f(x)=\frac{1}{2}\|Ax-b\|^2$ can be written as $f(x)=g(Ax)$ with 
$$g(y)=\frac{1}{2}\|y-b\|^2,$$
which is a strictly convex function and satisfies the assumption on $f$ in Theorem \ref{mainth1}. Therefore, we have the following corollary, which is the main contribution made in a series of papers \cite{cai2009linearized,cai2009convergence,cai2010singular}.

\begin{corollary}\label{coro001}
  Consider the bilevel optimization problem \eqref{BilevelOpt} with $\omega(x)=\lambda\|x\|_1+\frac{1}{2}\|x\|_2$ and $f(x)=\frac{1}{2}\|Ax-b\|^2$, where $\lambda>0$ is some fixed parameter, $A\in\RR^{m\times n}$ is a given nonzero matrix, and $b\in\RR^m$ is a given vector. Let $\{x_k\}$  be generalized by the following iterative scheme:
    \begin{eqnarray}\label{Lbi}
  \left\{\begin{array}{rll}
x_{k+1} &= & \mathcal{S}_\lambda(x_k^*-t_k\cdot A^T(Ax_k-b)), \\
x_{k+1}^*&=& x_k^*-t_k\cdot A^T(Ax_k-b), 
\end{array} \right.
\end{eqnarray}
 initialized with $x_0=x_0^*=0$,  where $\mathcal{S}_\lambda(x)=\min(|x|-\lambda,0)\textrm{sign}(x)$ is the componentwise soft shrinkage, and $t_k$ is a constant in $\left(0, \frac{2}{\|AA^T\|}\right)$ or an exact solution to the dual problem 
  
  \begin{equation}\label{dual001}
    \Min_{t>0} g(t)=\frac{1}{2}\|\mathcal{S}_\lambda(x_k^*-t\cdot A^T(Ax_k-b))\|^2+t\cdot\beta_k,
  \end{equation}
   with $\beta_k=\langle A^T(Ax_k-b), x_k\rangle -\frac{1}{\|AA^T\|}\|A^T(Ax_k-b)\|^2$.
 Then, the sequence $\{x_k\}$  converges to the optimal solution $\bar{x}$ of the bilevel optimization problem \eqref{BilevelOpt}.
\end{corollary}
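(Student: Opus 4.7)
The plan is to apply Theorem \ref{mainth1} directly, so the proof reduces to matching the data $(\omega,f)$ of the corollary to the hypotheses of that theorem and verifying the initialization. First I would check the structural assumptions: $\omega(x) = \lambda\|x\|_1 + \tfrac{1}{2}\|x\|^2$ is strongly convex with modulus $\mu=1$ (the quadratic term is $1$-strongly convex and the $\ell_1$ term is convex), while $f(x)=\tfrac{1}{2}\|Ax-b\|^2$ is convex with gradient $\nabla f(x) = A^T(Ax-b)$ Lipschitz of modulus $L=\|A^TA\|=\|AA^T\|$. This pins down the step-size range $(0,2\mu/L) = (0, 2/\|AA^T\|)$ stated in the corollary, and the coefficient $\tfrac{1}{L}\|\nabla f(x_k)\|^2$ makes $\beta_k$ of \eqref{dual001} coincide with the generic $\beta_k$ of \eqref{D00}.

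The only genuine computation is the identification $\nabla\omega^*(y) = \mathcal{S}_\lambda(y)$. Because $\omega$ is separable, $\omega^*(y) = \sum_i \omega_i^*(y_i)$ with $\omega_i(t) = \lambda|t| + t^2/2$; a one-dimensional conjugate calculation yields $(\omega_i^*)'(s) = \mathcal{S}_\lambda(s)$ component-wise, so the soft shrinkage is exactly the gradient of $\omega^*$. Substituting this formula together with $\nabla f(x_k) = A^T(Ax_k-b)$ into the generic updates of Algorithm \ref{ITS} reproduces the scheme \eqref{Lbi} verbatim, and substituting into \eqref{D00} reproduces the dual problem \eqref{dual001}. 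Thus Algorithm \ref{ITS} specialized to the present $(\omega,f)$ is literally the linearized Bregman iteration in the corollary.

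Next I would verify the initialization. With $x_0 = x_0^* = 0$, the subdifferential $\partial\omega(0) = \{0\} + \lambda\,\partial\|\cdot\|_1(0) = \lambda[-1,1]^n$ contains the origin, and $0\in\mathcal{R}(A^T)$ trivially; hence $x_0^*\in\partial\omega(x_0)\cap\mathcal{R}(A^T)$ as required. Finally I would invoke condition (ii) of Theorem \ref{mainth1} by writing $f(x)=g(Ax)$ with $g(y)=\tfrac{1}{2}\|y-b\|^2$, which is strictly (in fact strongly) convex, so no surjectivity assumption on $A$ is needed beyond $A\neq 0$. With every hypothesis of Theorem \ref{mainth1} in place, the theorem yields convergence of $\{x_k\}$ to the unique optimal solution $\bar{x}$ of \eqref{BilevelOpt}. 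There is no real obstacle here: the conjugate/soft-shrinkage identification is the one non-trivial calculation, and everything else is bookkeeping that matches the general framework to this specific instance.
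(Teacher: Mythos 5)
Your proposal is correct and follows essentially the same route as the paper: the paper also proves the corollary by specializing Theorem \ref{mainth1} via condition (ii) with $g(y)=\tfrac{1}{2}\|y-b\|^2$ strictly convex, together with the identities $\nabla\omega^*(x^*)=\mathcal{S}_\lambda(x^*)$ and $\omega^*(x^*)=\tfrac{1}{2}\|\mathcal{S}_\lambda(x^*)\|^2$ that turn Algorithm \ref{ITS} and the dual problem \eqref{D00} into \eqref{Lbi} and \eqref{dual001}. Your additional bookkeeping (modulus $\mu=1$, $L=\|AA^T\|$, and the check that $x_0^*=0\in\partial\omega(0)\cap\mathcal{R}(A^T)$) is exactly the detail the paper leaves implicit, so nothing is missing.
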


The proof follows directly from Theorem \ref{mainth1} and the fact that
$$\nabla \omega^*(x^*)=\mathcal{S}_\lambda(x^*), ~~\omega^*(x^*)=\frac{1}{2}\|\mathcal{S}_\lambda(x^*)\|^2.$$
The iterative scheme \eqref{Lbi} is precisely the linearized Bregman method up to the step-size $t_k$; please see (63d)-(63e) in \cite{lai2012augmented}. In original linearized Bregman methods, the step-size is usually set to be the constant $1/\|A\|^2$ to deduce convergence; see \cite{cai2009linearized}. Recently, it has been allowed to be the exact solution to the dual problem \eqref{dual001}, but in this setting the convergence to the optimal solution $\bar{x}$ is absent; see \cite{lorenz2014linearized}.  Our result in the corollary covers and supplements these existing studies. 

Let us turn to the problem \eqref{CFP}. For simplicity, we restrict our attention to the case of $N=1$, that is, 
$$f(x)=\frac{1}{2}\dist^2(Ax, Q),$$
where $Q\subset \RR^m$ is a given closed convex set. This form is general enough to cover the different noise models 
\begin{equation}\label{model}
 \Min_x \omega(x), ~\st ~~\|Ax-b\|_\diamond \leq \sigma,
\end{equation}
where the choice of the norm $\|\cdot\|_\diamond$ depends on the characteristics of noise involved in measurement vector $b$; e.g., we use $\ell_2$-norm for Gaussian noise, $\ell_1$-norm for impulsive noise and $\ell_\infty$-norm for uniformly distributed noise. Let $Q:=\{y\in\RR^m: \|y-b\|_\diamond \leq \sigma\}$; then the constraint $\|Ax-b\|_\diamond \leq \sigma$ can be written as $f(x)=\dist^2(Ax, Q)$ and hence the model \eqref{model} can be written as a bilevel convex optimization \eqref{BilevelOpt} so that our proposed  Algorithm \ref{ITS} can be applied. In particular, the model \eqref{model} with $\omega(x)=\lambda\|x\|_1+\frac{1}{2}\|x\|_2$ was discussed in \cite{lorenz2014linearized},  with the following iterative scheme 
    \begin{eqnarray}\label{Lbi01}
  \left\{\begin{array}{rll}
x_{k+1} &= & \mathcal{S}_\lambda(x_k^*-t_kA^T(Ax_k-\mathcal{P}_Q(Ax_k))), \\
x_{k+1}^*&=& x_k^*-t_kA^T(Ax_k-\mathcal{P}_Q(Ax_k)), 
\end{array} \right.
\end{eqnarray}
where $\mathcal{P}_Q$ is the projection operator onto $Q$. This scheme can be deduced from our proposed 
 Algorithm \ref{ITS} as well and hence its convergence to a feasible point and a sublinear rate of convergence for the inner-level function can be established by Theorem \ref{convinner} and Theorem \ref{thrate} respectively. However, the answer to the question of whether the iterates $\{x_k\}$ converges to the optimal solution $\bar{x}$ is negative even when the matrix $A$ is surjective. On one hand, the function $f(x)=g(Ax)$ with $g(y)=\frac{1}{2}\dist^2(y,Q)$ fails to satisfy the conditions (i) and (ii) in Theorem \ref{mainth1}; on the other hand, we give numerical tests to illustrate that the iterates $\{x_k\}$ may converge to a different point from the optimal solution $\bar{x}$. 

\subsection{Linear and sublinear convergence}
\begin{theorem}\label{thrate}
Let $\{x_k\}$  be generated by Algorithm \ref{ITS} with $t_k$ belonging to $(0,\frac{2\mu}{L}]$. Then the sequence of function values $\{f(x_k)\}$ is monotonically nonincreasing.  If the step-size $t_k$ is restricted to the interval $(0,\frac{\mu}{L}]$, then the sequence of function values $\{f(x_k)\}$ monotonically converges to the optimal function value $\bar{f}:=\min f(x)$ sublinearly in the sense that 
\begin{equation}\label{sublinear}
  f(x_{T+1})-\bar{f}\leq \frac{\inf_{x\in\overline{X}}D_\omega^{x_0^*}(x,x_0)}{\sum_{k=0}^{T}t_k}. 
\end{equation}
If the step-size $t_k$ is allowed to belong to a larger interval $(0,\frac{2\mu}{L})$ and the out-level objective $\omega$ is assumed to be gradient-Lipschtiz-continuous with constant $\nu>0$, then we have that 
\begin{equation}\label{sublinear0}
  f(x_{T+1})-\bar{f}\leq \frac{f(x_0)-\bar{f}}{1+ (f(x_0)-\bar{f})\cdot\sum_{k=0}^{T}h(t_k)},
\end{equation}
where $h(\tau)=\frac{\tau(2\mu-L\tau)\mu}{4\nu\cdot c(x_0)}$ with $c(x_0)=\inf_{x\in\overline{X}}D_\omega^{x_0^*}(x,x_0).$
\end{theorem}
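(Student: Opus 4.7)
The plan is to prove the three claims in sequence, each grounded in combining the $L$-smoothness of $f$ with the strong convexity of $\omega$ (via its dual $\omega^*$) and, in the third part, also with the smoothness of $\omega$.

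For the monotonicity of $\{f(x_k)\}$ I would start from the $L$-smoothness descent inequality $f(x_{k+1}) \leq f(x_k) + \langle \nabla f(x_k), x_{k+1}-x_k\rangle + \frac{L}{2}\|x_{k+1}-x_k\|^2$ and estimate the inner-product term through the update rule. Applying the Baillon–Haddad theorem to $\omega^*$, which is $(1/\mu)$-Lipschitz by Lemma \ref{scLip}, gives $\langle x_{k+1}^*-x_k^*, x_{k+1}-x_k\rangle \geq \mu\|x_{k+1}-x_k\|^2$; since $x_{k+1}^* - x_k^* = -t_k\nabla f(x_k)$, this yields $\langle \nabla f(x_k), x_{k+1}-x_k\rangle \leq -\frac{\mu}{t_k}\|x_{k+1}-x_k\|^2$, so that $f(x_{k+1}) - f(x_k) \leq (\frac{L}{2}-\frac{\mu}{t_k})\|x_{k+1}-x_k\|^2$. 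For $t_k \in (0, 2\mu/L]$ this coefficient is nonpositive, proving monotonicity.

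For the sublinear bound \eqref{sublinear} in the regime $t_k \in (0, \mu/L]$, I would fix $\hat{x} \in \overline{X}$ and apply the three-point identity \eqref{Bregdis1} with $u = \hat{x}$, $p = x_{k+1}$, $q = x_k$ to get $D_\omega^{x_{k+1}^*}(\hat{x}, x_{k+1}) - D_\omega^{x_k^*}(\hat{x}, x_k) + D_\omega^{x_k^*}(x_{k+1}, x_k) = t_k\langle \nabla f(x_k), \hat{x} - x_{k+1}\rangle$. Splitting the right-hand side as $t_k\langle\nabla f(x_k), \hat{x}-x_k\rangle + t_k\langle\nabla f(x_k), x_k-x_{k+1}\rangle$, bounding the first piece by convexity of $f$ ($\leq t_k(\bar{f}-f(x_k))$) and the second piece by the $L$-smoothness descent inequality ($\leq t_k(f(x_k)-f(x_{k+1})) + \frac{Lt_k}{2}\|x_{k+1}-x_k\|^2$), and then absorbing $D_\omega^{x_k^*}(x_{k+1},x_k)$ via the strong-convexity estimate \eqref{Bregdis3}, I obtain $t_k(f(x_{k+1}) - \bar{f}) \leq D_\omega^{x_k^*}(\hat{x}, x_k) - D_\omega^{x_{k+1}^*}(\hat{x}, x_{k+1}) + (\frac{Lt_k}{2}-\frac{\mu}{2})\|x_{k+1}-x_k\|^2$. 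The assumption $t_k \leq \mu/L$ renders the last bracket nonpositive. Summing from $k = 0$ to $T$ telescopes the Bregman differences, the monotonicity from step one lets me replace $f(x_{k+1})$ by $f(x_{T+1})$ on the left, and a final infimum over $\hat{x} \in \overline{X}$ yields \eqref{sublinear}.

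For \eqref{sublinear0} with $t_k \in (0, 2\mu/L)$ and $\nu$-smooth $\omega$, I would reuse the per-step decrease $\delta_k - \delta_{k+1} \geq \frac{2\mu - Lt_k}{2t_k}\|x_{k+1}-x_k\|^2$ from step one, writing $\delta_k := f(x_k) - \bar{f}$. The $\nu$-smoothness of $\omega$ yields $\|x_{k+1}^*-x_k^*\| \leq \nu\|x_{k+1}-x_k\|$, hence $\|x_{k+1}-x_k\|^2$ is bounded below by a positive multiple of $\|\nabla f(x_k)\|^2$. Convexity combined with Cauchy–Schwarz gives $\delta_k \leq \|\nabla f(x_k)\|\,\|x_k-\hat{x}\|$ for every $\hat{x}\in\overline{X}$, while strong convexity paired with Lemma~\ref{lembasic} (the Bregman distance to any point of $\overline{X}\subseteq H_k$ is monotonically nonincreasing) produces the uniform-in-$k$ bound $\|x_k-\hat{x}\|^2 \leq \frac{2}{\mu}D_\omega^{x_0^*}(\hat{x}, x_0)$. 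Choosing $\hat{x}$ to attain $c(x_0)$ pins the denominator and gives $\|\nabla f(x_k)\|^2 \geq \frac{\mu\delta_k^2}{2c(x_0)}$. Chaining all three estimates produces a recursion of the form $\delta_{k+1} \leq \delta_k - h(t_k)\delta_k^2$ with $h$ as prescribed; the classical reciprocal trick $\frac{1}{\delta_{k+1}} \geq \frac{1}{\delta_k} + h(t_k)$ (valid while $h(t_k)\delta_k \leq 1$) then telescopes to \eqref{sublinear0}. The main obstacle I anticipate is exactly this last bottleneck, because producing a $k$-independent lower bound on $\|\nabla f(x_k)\|$ in terms of $\delta_k$ requires controlling $\|x_k-\hat{x}\|$ uniformly, and the only available tool is the Bregman-distance monotonicity of Lemma~\ref{lembasic}, which forces the particular choice $\hat{x} = \arg\min_{x\in\overline{X}} D_\omega^{x_0^*}(x, x_0)$.
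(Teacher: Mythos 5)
Your proposal is correct and follows essentially the same route as the paper: for \eqref{sublinear} the three-point identity split by convexity of $f$ and the descent lemma, telescoping, monotonicity, and an infimum over $\overline{X}$; for \eqref{sublinear0} the $\nu$-smoothness bound $\|x_{k+1}-x_k\|\geq \nu^{-1}t_k\|\nabla f(x_k)\|$, the Cauchy--Schwarz/strong-convexity estimate $f(x_k)-\bar{f}\leq \sqrt{2c(x_0)/\mu}\,\|\nabla f(x_k)\|$ (using the Bregman-distance monotonicity from Lemma \ref{lembasic}), and the reciprocal-summation trick. Your monotonicity step via co-coercivity of $\nabla\omega^*$ is only a repackaging of the paper's three-point-identity argument, since both reduce to the same inequality $t_k\bigl(f(x_{k+1})-f(x_k)\bigr)\leq \bigl(\tfrac{Lt_k}{2}-\mu\bigr)\|x_{k+1}-x_k\|^2$.
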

\begin{proof}
By the three-point identity \eqref{Bregdis1} of Bregman distance in Lemma \ref{lemBreg}, we can derive that 
\begin{eqnarray*} 
&&D_\omega^{x_k^*}(x,x_k) - D_\omega^{x_{k+1}^*}(x,x_{k+1}) - D_\omega^{x_k^*}(x_{k+1},x_k)\\
&=&\langle x-x_{k+1}, x_{k+1}^*-x_k^*\rangle\\
&=& -t_k\langle x-x_{k+1}, \nabla f(x_k)\rangle\\
&=& -t_k(\langle x-x_{k}, \nabla f(x_k)\rangle+\langle x_k-x_{k+1}, \nabla f(x_k)\rangle).
\end{eqnarray*}
Using the subgradient inequality, we have 
$$\langle x-x_{k}, \nabla f(x_k)\rangle\leq f(x)-f(x_k).$$
Using the Lipschitz continuity of $\nabla f$, we have 
$$\langle x_k-x_{k+1}, \nabla f(x_k)\rangle\leq f(x_k)-f(x_{k+1})+\frac{L}{2}\|x_{k+1}-x_k\|^2.$$
Therefore, the term $D_\omega^{x_k^*}(x,x_k)- D_\omega^{x_{k+1}^*}(x,x_{k+1}) - D_\omega^{x_k^*}(x_{k+1},x_k)$ can be bounded as follows:
\begin{equation}\label{term1}
D_\omega^{x_k^*}(x,x_k)- D_\omega^{x_{k+1}^*}(x,x_{k+1}) - D_\omega^{x_k^*}(x_{k+1},x_k)\geq t_k(f(x_{k+1})-f(x))-\frac{Lt_k}{2}\|x_{k+1}-x_k\|^2.
\end{equation}
Letting $x=x_k$ in \eqref{term1} and using the relationship \eqref{Bregdis3} in Lemma \ref{lemBreg}, we can derive that 
\begin{eqnarray} \label{more7}
t_k(f(x_{k+1})-f(x_k))&\leq &-D_\omega^{x_k^*}(x_{k+1},x_k) - D_\omega^{x_{k+1}^*}(x_k,x_{k+1}) +\frac{Lt_k}{2}\|x_{k+1}-x_k\|^2\nonumber\\
&\leq&-\mu\|x_{k+1}-x_k\|^2+\frac{Lt_k}{2}\|x_{k+1}-x_k\|^2\nonumber\\
&=& \left(\frac{Lt_k}{2}-\mu\right)\|x_{k+1}-x_k\|^2.
\end{eqnarray}
If the step-size $t_k\in (0,\frac{2\mu}{L}]$, then $\left(\frac{Lt_k}{2}-\mu\right)\leq 0$ and hence $f(x_{k+1})\leq f(x_k)$ which means that the sequence of function values $\{f(x_k)\}$ is monotonically nonincreasing.

Now, we return to \eqref{term1} and derive that for $t_k\in (0,\frac{\mu}{L}]$,
\begin{eqnarray*} 
t_k(f(x_{k+1})-f(x))&\leq &D_\omega^{x_k^*}(x,x_k) - D_\omega^{x_{k+1}^*}(x,x_{k+1}) - D_\omega^{x_k^*}(x_{k+1},x_k) +\frac{Lt_k}{2}\|x_{k+1}-x_k\|^2\\
&\leq&D_\omega^{x_k^*}(x,x_k) - D_\omega^{x_{k+1}^*}(x,x_{k+1})+\left(\frac{Lt_k}{2}-\frac{\mu}{2}\right)\|x_{k+1}-x_k\|^2\\
&\leq&D_\omega^{x_k^*}(x,x_k) - D_\omega^{x_{k+1}^*}(x,x_{k+1}).
\end{eqnarray*}
Summing the inequality above from $k=0$ to $k=T$, we obtain
\begin{equation}\label{term2}
\sum_{k=0}^{T}t_k(f(x_{k+1})-f(x))\leq D_\omega^{x_0^*}(x,x_0) - D_\omega^{x_{T+1}^*}(x,x_{T+1})\leq D_\omega^{x_0^*}(x,x_0).
\end{equation}
By the monotonicity of $\{f(x_k)\}$, we can get 
\begin{equation}\label{term3}
(f(x_{T+1})-f(x))\sum_{k=0}^{T}t_k \leq D_\omega^{x_0^*}(x,x_0),
\end{equation}
which implies the desired result \eqref{sublinear} by noting that $\inf_{x\in\overline{X}}(f(x_{T+1})-f(x))=f(x_{T+1})-\bar{f}.$

Below, let us show \eqref{sublinear0}. 
Since the out-level objective $\omega$ is additionally assumed to be gradient-Lipschtiz-continuous with constant $\nu>0$, we can derive that 
\begin{eqnarray*} 
\|x_{k+1}-x_k\|&\geq &\nu^{-1}\|\nabla \omega(x_{k+1})-\nabla\omega x_k\|\\
&=&\nu^{-1}\|x_{k+1}^*-x_k^*\|\\
&=&\nu^{-1}t_k\|\nabla f(x_k)\|.
\end{eqnarray*}
Combining this with the inequality \eqref{more7} and noting the step-size condition $t_k\in (0,\frac{2\mu}{L})$, we can get 
\begin{equation}\label{more8}
f(x_{k+1})-f(x_k)\leq t_k\nu^{-2}\left(\frac{Lt_k}{2}-\mu\right)\|\nabla f(x_k)\|^2.
\end{equation}
On the other hand, using the subgradient inequality, the Cauchy-Schwartz inequality, and the strong convexity of $\omega$, we derive that for any $x\in\overline{X}$,
\begin{eqnarray} \label{more9}
f(x_{k})-f(x)&\leq &\langle \nabla f(x_k), x_k-x\rangle \nonumber\\
&\leq&\|\nabla f(x_k)\| \cdot \| x_k-x\|\nonumber\\
&\leq& \|\nabla f(x_k)\|\cdot \sqrt{\frac{2}{\mu}D_\omega^{x_k}(x,x_k)}\nonumber\\
&\leq& \|\nabla f(x_k)\| \cdot \sqrt{\frac{2}{\mu}D_\omega^{x_0}(x,x_0)},
\end{eqnarray}
where the last inequality follows from the descent property \eqref{desc2} in Lemma \ref{lembasic}. Thus, we can get 
\begin{equation}\label{more0}
f(x_{k})-\bar{f}\leq \|\nabla f(x_k)\| \cdot \sqrt{\frac{2}{\mu}\inf_{x\in\overline{X}}D_\omega^{x_0}(x,x_0)}=\sqrt{\frac{2c(x_0)}{\mu}}\cdot\|\nabla f(x_k)\|.
\end{equation}
For simplicity of deduction, we let $f_k:=f(x_k)-\bar{f}$. Then, combining \eqref{more8} and \eqref{more0} and using the definition of $h(\cdot)$, we have 
\begin{equation}\label{more1}
f_{k+1}-f_k\leq t_k\nu^{-2}\left(\frac{Lt_k}{2}-\mu\right)\frac{\mu}{2c(x_0)}\cdot f_k^2=-h(t_k)\cdot f_k^2.
\end{equation}
Thus, it holds that
\begin{equation}\label{more2}
\frac{1}{f_{k+1}}\geq \frac{1}{f_k}+h(t_k)\cdot\frac{f_k}{f_{k+1}}\geq \frac{1}{f_k}+h(t_k),
\end{equation}
where the last inequality follows from the monotonicity of $\{f(x_k)\}$. Summing \eqref{more2} from $k=0$ to $T$, we get 
\begin{equation}\label{more3}
\frac{1}{f_{T+1}}\geq \frac{1}{f_0}+\sum_{k=0}^{T} h(t_k).
\end{equation}
Therefore, we have 
\begin{equation}\label{more4}
f_{T+1}\leq \frac{f_0}{1+ f_0\cdot \sum_{k=0}^{T} h(t_k)}.
\end{equation}
This completes the proof. 
\end{proof}

Note that when $\omega(x) = \frac{1}{2}\|x\|^2$, $\nabla \omega^*$ is a identity mapping and Algorithm \ref{ITS} reduce to the gradient descent method. In this case, the allowed step-size interval and the sublinear convergence result are consistent with the classical result \cite{bubeck2015convex} for the gradient descent method. This implies that  \eqref{sublinear0} in Theorem \ref{thrate} provides a tight bound. 

In order to derive rates of linear convergence, we introduce the following Bregman distance growth condition, which was independently proposed in \cite{zhang2021proximal} and \cite{lu2018relatively} recently. 

\begin{definition}
Let $\overline{X}$ be the minimizer set of $f$ and $\bar{f}$ be its optimal function value. We say that $f$ satisfies the Bregman distance growth condition if there exists a constant $\gamma>0$ such that 
\begin{equation}\label{Bregcond}
  f(x)-\bar{f}\geq \gamma \cdot \inf_{z\in\overline{X}}D_\omega^{x^*}(z,x). 
\end{equation}
\end{definition}

\begin{definition}
Let $\overline{X}$ be the minimizer set of $f$ and $\bar{f}$ be its optimal function value. We say that $f$ satisfies the quadratic growth condition if there exists $\eta>0$ such that for all $x\in \RR^n$ we have  
\begin{equation}\label{QGcond}
  f(x)-\bar{f}\geq \frac{\eta}{2}\dist^2(x,\overline{X}). 
\end{equation}
\end{definition}

\begin{theorem}[linear convergence]\label{mainth01}
Let $\{x_k\}$  be generated by Algorithm \ref{ITS} with $t_k$ being an exact solution to the dual problem \eqref{D00} or being the constant $\frac{\mu}{L}$. Assume that $f$ satisfies the Bregman distance growth condition \eqref{Bregcond}. Then, the sequence $\{x_k\}$  converges to the feasible set $\overline{X}$ linearly in the following sense 
\begin{equation}\label{linconv1}
  \inf_{x\in\overline{X}} D_\omega^{x_k^*}(x,x_k)\leq \left(1-\left(\frac{\mu\cdot \gamma}{2L}\right)^2\right)^k\inf_{x\in\overline{X}} D_\omega^{x_0^*}(x,x_0). 
\end{equation}
\end{theorem}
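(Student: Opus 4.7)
The plan is to combine the Bregman descent guaranteed by Lemma \ref{lembasic} with the Bregman distance growth condition \eqref{Bregcond} in order to obtain a strict one-step contraction on the quantity $d_k := \inf_{x\in\overline{X}} D_\omega^{x_k^*}(x,x_k)$, and then to iterate. The first observation is that since every point of $\overline{X}$ satisfies $\nabla f(x)=0$, the Baillon--Haddad argument in Section \ref{sec:alg} shows $\overline{X}\subset H_k$ for each $k$, so the descent inequality \eqref{desc1} applies for every $x\in\overline{X}$. Passing to the infimum first on the left and then on the right-hand side, I obtain
\begin{equation*}
d_{k+1} \;\leq\; d_k - \frac{\mu}{2L^{2}}\|\nabla f(x_k)\|^{2}.
\end{equation*}

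The main task is then to lower bound $\|\nabla f(x_k)\|^{2}$ by a positive multiple of $d_k$. For this I would repeat the chain of inequalities used in \eqref{more9}--\eqref{more0}: for any fixed $\bar{x}\in\overline{X}$, convexity of $f$ together with the Cauchy--Schwartz inequality yields $f(x_k)-\bar f\leq \|\nabla f(x_k)\|\,\|x_k-\bar x\|$, and the strong convexity estimate \eqref{Bregdis3} gives $\|x_k-\bar x\|\leq\sqrt{(2/\mu)D_\omega^{x_k^*}(\bar x,x_k)}$. Taking the infimum over $\bar{x}\in\overline{X}$ produces
\begin{equation*}
f(x_k)-\bar f \;\leq\; \|\nabla f(x_k)\|\sqrt{\tfrac{2}{\mu}\,d_k}.
\end{equation*}
Now I insert the Bregman distance growth condition \eqref{Bregcond}, $\gamma d_k\leq f(x_k)-\bar f$, and square both sides to get $\|\nabla f(x_k)\|^{2}\geq \tfrac{\mu\gamma^{2}}{2}\,d_k$.

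Substituting this lower bound into the descent inequality yields the geometric contraction
\begin{equation*}
d_{k+1} \;\leq\; \left(1-\frac{\mu^{2}\gamma^{2}}{4L^{2}}\right) d_k
\;=\; \left(1-\Big(\tfrac{\mu\gamma}{2L}\Big)^{2}\right) d_k,
\end{equation*}
and a straightforward induction on $k$ delivers \eqref{linconv1}. I expect the only delicate point to be the order of the two infima in the first step: \eqref{desc1} is pointwise in $x\in\overline{X}$, so one must first minimize the left side (which is legal because the right side does not depend on that minimization) and only afterwards minimize the right side. Everything else is bookkeeping built on already established ingredients, namely Lemma \ref{lembasic}, the strong convexity lower bound \eqref{Bregdis3}, convexity of $f$, and the growth assumption itself.
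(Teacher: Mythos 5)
Your proof is correct and follows essentially the same route as the paper: take the infimum over $\overline{X}\subset H_k$ in the descent inequality \eqref{desc1}, lower-bound $\|\nabla f(x_k)\|^{2}$ by $\tfrac{\mu\gamma^{2}}{2}\,\inf_{x\in\overline{X}}D_\omega^{x_k^*}(x,x_k)$ via the growth condition \eqref{Bregcond}, and iterate the resulting contraction with factor $1-\left(\tfrac{\mu\gamma}{2L}\right)^{2}$. The only cosmetic difference is that the paper obtains this gradient lower bound by first passing to a quadratic growth condition and citing a PL-type interplay theorem, whereas you derive it inline through the same convexity/Cauchy--Schwartz/strong-convexity chain the paper itself uses in \eqref{more9}--\eqref{more0}, with matching constants.
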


\begin{proof}
  Starting with the following basic descent property stated in Lemma \ref{lembasic}, 
  \begin{equation*}
 D_\omega^{x_{k+1}^*}(x,x_{k+1})\leq  D_\omega^{x_{k}^*}(x,x_{k})-\frac{\mu}{2L^2}\|\nabla f(x_k)\|^2, ~\forall x\in H_k,
\end{equation*}
we can get  
  \begin{equation}\label{desc01}
 \inf_{x\in\overline{X}} D_\omega^{x_{k+1}^*}(x,x_{k+1})\leq  \inf_{x\in\overline{X}} D_\omega^{x_{k}^*}(x,x_{k})-\frac{\mu}{2L^2}\|\nabla f(x_k)\|^2, ~\forall x\in H_k,
\end{equation}
where we have used the fact $\overline{X}\subset H_k$. Using the Bregman distance growth condition \eqref{Bregcond} and the strong convexity of $\omega$, we can derive that 
  \begin{eqnarray} \label{desc001}
 f(x)-\bar{f}& \geq & \gamma \cdot \inf_{z\in\overline{X}}D_\omega^{x^*}(z,x) \nonumber \\
&\geq& \gamma \cdot \inf_{z\in\overline{X}} \frac{\mu}{2}\|z-x\|^2  \nonumber\\
&=& \frac{\gamma\cdot \mu}{2}\dist^2(x,\overline{X}).  
\end{eqnarray} 
 In other words, the inner objective function $f$ satisfies the quadratic growth condition and hence it also satisfies the PL inequality  
   \begin{equation}\label{PL0}
 \|\nabla f(x)\|\geq \tau\sqrt{f(x)-\bar{f}},
\end{equation}
where $\tau=\sqrt{\frac{\gamma\cdot\mu}{2}}$; see the interplay theorem in \cite{zhang2020new}.  
Now, combining \eqref{PL0}, \eqref{desc01}, and \eqref{Bregcond}, we derive that
  \begin{eqnarray} 
\inf_{x\in\overline{X}} D_\omega^{x_{k+1}^*}(x,x_{k+1})& \leq &  \inf_{x\in\overline{X}}D_\omega^{x_{k}^*}(x,x_{k}) -\frac{\mu}{2L^2}\tau^2(f(x_k)-\bar{f}) \nonumber \\
& \leq &  \inf_{x\in\overline{X}}D_\omega^{x_{k}^*}(x,x_{k}) -\frac{\mu}{2L^2}\tau^2\cdot\gamma \inf_{x\in\overline{X}}D_\omega^{x_{k}^*}(x,x_{k}) \nonumber\\
&=& \left(1-\left(\frac{\mu\cdot \gamma}{2L}\right)^2\right)\inf_{x\in\overline{X}} D_\omega^{x_{k}^*}(x,x_{k}).
\end{eqnarray} 
This completes the proof. 
\end{proof}

The key to obtaining linear convergence is the Bregman distance growth condition. Next, we will give the validation of the condition on problem \eqref{gBP}. We first give the following result from \cite{schopfer2019linear}. 
\begin{lemma}[\cite{schopfer2019linear}, Lemma 3.1]\label{lemmore1}
  Suppose that the linear system $Ax = b$ is consistent.  Let $\omega(x)=\lambda\|x\|_1+\frac{1}{2}\|x\|^2$, where $\lambda>0$ is fixed parameter. Denote
  $$\bar{x}=\arg\min_{x} \omega(x),~~\st~Ax=b.$$
  Then, there exists a constant $\tau>0$ such that for all $x\in\RR^n$ with $\partial \omega(x)\bigcap \mathcal{R}(A^T)\neq \emptyset$ and $x^*\in \partial \omega(x)\bigcap \mathcal{R}(A^T)\neq \emptyset$ we have 
\begin{equation}\label{EB}
D_\omega^{x^*}(\bar{x},x)\leq \tau\cdot\|Ax-b\|^2.
\end{equation}
\end{lemma}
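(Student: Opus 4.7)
The plan is to exploit two structural features in tandem: the three-point identity for Bregman distances, which converts the one-sided distance into an inner product against a dual gap, and the polyhedral structure of $\partial\omega$ for the specific $\omega(x)=\lambda\|x\|_1+\tfrac12\|x\|^2$, which promotes a linear, Hoffman-type estimate into the desired quadratic bound.

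First I would apply the three-point identity \eqref{Bregdis1} to the pair $\bar x,x$ with their admissible subgradients, yielding
$$D_\omega^{x^*}(\bar x,x)+D_\omega^{\bar x^*}(x,\bar x)=\langle \bar x^*-x^*,\,\bar x-x\rangle.$$
By the KKT conditions for $\bar x=\arg\min\{\omega(x):Ax=b\}$ there is a multiplier $\bar y$ with $\bar x^*=A^T\bar y\in\partial\omega(\bar x)\cap\mathcal R(A^T)$; by hypothesis $x^*=A^Ty$ for some $y$, which I may take in $\mathcal R(A)$ since only the image $A^Ty$ enters the bound. The inner product on the right then becomes $\langle \bar y-y,\,A\bar x-Ax\rangle=\langle \bar y-y,\,b-Ax\rangle$, so Cauchy--Schwarz and the nonnegativity of $D_\omega^{\bar x^*}(x,\bar x)$ give the intermediate linear estimate
$$D_\omega^{x^*}(\bar x,x)\le \|\bar y-y\|\cdot\|Ax-b\|.$$

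The main obstacle is to bound $\|\bar y-y\|$ by a constant multiple of $\|Ax-b\|$; plugging this back into the display above gives the lemma with $\tau$ equal to that constant squared over the product factor (absorbed into one $\|Ax-b\|$). This is where the special form of $\omega$ enters decisively. The conjugate satisfies $\nabla\omega^*(z)=\mathcal S_\lambda(z)$, a piecewise affine map, so the graph of $\partial\omega$ is a finite union of polyhedral pieces indexed by the sign pattern of $\mathcal S_\lambda$. On each piece, the relation $x=\nabla\omega^*(x^*)$ is affine, $A^T$ restricted to $\mathcal R(A)$ is injective so $\|\bar y-y\|\le \sigma^{-1}\|x^*-\bar x^*\|$ with $\sigma$ the smallest nonzero singular value of $A$, and a classical Hoffman bound on the affine system $Az=b$ controls $\|x-\bar x\|$ by $\|Ax-b\|$ up to the piece-dependent Lipschitz modulus connecting $x$ and $x^*$. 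Chaining these estimates and taking a maximum over the finitely many sign patterns produces a uniform constant $C$ with $\|\bar y-y\|\le C\|Ax-b\|$, hence $\tau=C$.

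The hardest and most technical step is the compilation of per-piece Hoffman constants into one global constant while respecting the range condition $x^*\in\mathcal R(A^T)$, which demands a careful case analysis on the sign patterns compatible with $\mathcal R(A^T)$ and a compactness argument to rule out degeneracies on the boundary pieces. Rather than reprove this combinatorial step from scratch, I would cite the full argument in \cite{schopfer2019linear}; the three-point identity reduction above then makes the lemma an almost immediate consequence of their error bound.
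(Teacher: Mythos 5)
The paper itself gives no proof of this statement: it is imported verbatim as Lemma~3.1 of \cite{schopfer2019linear}, so your closing move of deferring the hard step to that reference is, by itself, exactly what the paper does. The issue is the reduction you wrap around the citation, because its central quantitative claim fails. The three-point identity and the resulting estimate $D_\omega^{x^*}(\bar x,x)\le\|\bar y-y\|\cdot\|Ax-b\|$ are fine, but the asserted bound $\|\bar y-y\|\le C\|Ax-b\|$ with a multiplier $\bar y$ fixed once and for all by the KKT conditions is false in general: the set of admissible multipliers $\{\bar y:\ A^T\bar y\in\partial\omega(\bar x)\}$ need not be a singleton, and points $x$ approaching $\bar x$ with different sign patterns force their (unique) admissible $y$ toward different elements of that set. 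Concretely, take $n=2$, $A=(1\ \ 1)$, $b=0$, so $\bar x=0$ and the admissible multipliers form the segment $[-\lambda,\lambda]$; for $x=(\epsilon,\epsilon)$ with $\epsilon>0$ the only admissible $y$ is $\lambda+\epsilon$ while $\|Ax-b\|=2\epsilon$, and for $x=(-\epsilon,-\epsilon)$ it is $-\lambda-\epsilon$, so no single fixed $\bar y$ can satisfy $\|\bar y-y\|\le C\|Ax-b\|$ for small $\epsilon$. (The lemma itself survives there: $D_\omega^{x^*}(\bar x,x)=\epsilon^2=\tfrac14\|Ax-b\|^2$, precisely because the comparison subgradient $\bar x^*\in\partial\omega(\bar x)\cap\mathcal{R}(A^T)$ must be chosen adaptively in $x$.)

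With a fixed $\bar y$, your chain only delivers a linear estimate $D_\omega^{x^*}(\bar x,x)\lesssim\|Ax-b\|$ near $\bar x$, and upgrading that to the quadratic bound is the entire nontrivial content of the cited lemma; it is obtained in \cite{schopfer2019linear} by the piecewise-linear error-bound analysis, not by the fixed-multiplier argument you outline. Two further small gaps in the sketch: Hoffman's inequality controls $\dist(x,\{z:Az=b\})$, not $\|x-\bar x\|$ (one needs additionally that any feasible point admitting a subgradient in $\mathcal{R}(A^T)$ is necessarily $\bar x$), and the ``piece-dependent Lipschitz modulus connecting $x$ and $x^*$'' is never made uniform. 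So taken as a proof the proposal has a genuine gap; taken as a citation it coincides with the paper's treatment, in which case the intermediate claims should be deleted or corrected rather than presented as a reduction.
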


\begin{corollary}\label{coro002}
  Under the same setting of Corollary \ref{coro001} and assuming that $A^Tb\neq 0$, then $f$ satisfies the Bregman distance growth condition with a constant $\frac{\eta}{2\tau L^2}$. Moreover, the sequence $\{x_k\}$  converges to the feasible set $\overline{X}$ linearly in the following sense that
\begin{equation}\label{linconv}
  \inf_{x\in\overline{X}} D_\omega^{x_k^*}(x,x_k)\leq \left(1-\left(\frac{\mu \eta }{4\tau L^3}\right)^2\right)^k \inf_{x\in\overline{X}} D_\omega^{x_0^*}(x,x_0). 
\end{equation}
\end{corollary}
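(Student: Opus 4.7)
The plan is to specialize Theorem \ref{mainth01} to this setting by verifying the Bregman distance growth condition \eqref{Bregcond} for $f(x) = \tfrac{1}{2}\|Ax - b\|^2$ along the iterates of Algorithm \ref{ITS}, and then reading off the linear rate. The key technical tool that will supply the growth constant is the error bound of Lemma \ref{lemmore1}, so most of the work amounts to checking that its hypotheses hold throughout the run of the algorithm.

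The first step is to verify by induction that every dual iterate lies in $\mathcal{R}(A^T)$. The initialization $x_0^* = 0$ trivially belongs to this subspace, and the recursion $x_{k+1}^* = x_k^* - t_k A^T(Ax_k - b)$ only adds vectors in $\mathcal{R}(A^T)$, so the property propagates to all $k$. Combined with $x_k^* \in \partial\omega(x_k)$, which holds by construction, each pair $(x_k, x_k^*)$ then falls in the domain of Lemma \ref{lemmore1}. Applying that lemma at $x_k$ and using $\|Ax_k - b\|^2 = 2f(x_k)$, together with $\bar f = 0$ for the underlying consistent linear system, delivers
\begin{equation*}
\inf_{z \in \overline{X}} D_\omega^{x_k^*}(z, x_k) \;\leq\; D_\omega^{x_k^*}(\bar x, x_k) \;\leq\; 2\tau\,(f(x_k) - \bar f),
\end{equation*}
which is precisely \eqref{Bregcond} along the iterates. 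The hypothesis $A^T b \neq 0$ simply prevents the degenerate case in which the chosen initialization $x_0 = 0$ is already optimal.

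With \eqref{Bregcond} verified, the linear rate \eqref{linconv} is a direct invocation of Theorem \ref{mainth01}. The step I expect to require the most attention is the constant-tracking: Theorem \ref{mainth01} yields a rate of the form $\bigl(1 - (\mu\gamma/(2L))^2\bigr)^k$, and matching this to the announced $\bigl(1 - (\mu\eta/(4\tau L^3))^2\bigr)^k$ amounts to identifying the growth constant produced by Lemma \ref{lemmore1} with $\eta/(2\tau L^2)$, where $\eta$ plays the role of the quadratic-growth modulus of $f$ (essentially the smallest nonzero squared singular value of $A$) and the factor $L^2$ arises in passing between $\|Ax - b\|^2$ and a Euclidean or Bregman distance to $\overline{X}$. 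Beyond this bookkeeping, no further analytical ingredient is required.
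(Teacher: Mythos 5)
Your overall strategy (verify the Bregman distance growth condition along the iterates via Lemma \ref{lemmore1}, then invoke Theorem \ref{mainth01}) is the right shape, and your induction showing $x_k^*\in\mathcal{R}(A^T)$ is a correct and necessary observation that the paper leaves implicit. However, there is a genuine gap: you apply Lemma \ref{lemmore1} directly to the system $Ax=b$, assuming it is consistent and that $\bar f=0$. Neither is part of the hypotheses: the setting of Corollary \ref{coro001} is the least-squares bilevel problem \eqref{gBP}, where $b$ may lie outside $\mathcal{R}(A)$, and the whole point of the paper's argument (stated explicitly in the remark after the corollary) is to \emph{drop} the consistency requirement of \cite{schopfer2019linear}. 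The paper does this by rewriting $\overline{X}=\{x: A^T(Ax-b)=0\}$, so that the bilevel problem becomes minimizing $\omega$ over the normal equations $A^TAx=A^Tb$, which are always consistent; Lemma \ref{lemmore1} is then applied to \emph{that} system (using $\mathcal{R}(A^TA)=\mathcal{R}(A^T)$ and $A^Tb\neq 0$), giving $D_\omega^{x^*}(\bar x,x)\leq\tau\|A^TAx-A^Tb\|^2=\tau\|\nabla f(x)\|^2$ whenever $x^*\in\partial\omega(x)\cap\mathcal{R}(A^T)$.

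The second problem is that the "constant-tracking" you defer is not bookkeeping but the missing analytical core. Even in the consistent special case, your inequality $D_\omega^{x_k^*}(\bar x,x_k)\leq 2\tau\,(f(x_k)-\bar f)$ yields a growth constant $1/(2\tau)$, not the claimed $\frac{\eta}{2\tau L^2}$, and no factor $\eta$ or $L^2$ can appear in your route because Lemma \ref{lemmore1} applied to $Ax=b$ already bounds the Bregman distance by $\|Ax-b\|^2$ directly. In the paper's route these factors arise from two additional ingredients you never introduce: the gradient-Lipschitz bound $\|\nabla f(x)\|=\|\nabla f(x)-\nabla f(\hat x)\|\leq L\,\dist(x,\overline{X})$ for $\hat x\in\overline{X}$, and the quadratic growth condition $f(x)-\bar f\geq\frac{\eta}{2}\dist^2(x,\overline{X})$, which holds because $f=g(Ax)$ (invoking \cite{necoara2019linear}) and is valid also when $\bar f>0$. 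Chaining these with the error bound gives $\inf_{z\in\overline{X}}D_\omega^{x^*}(z,x)\leq\frac{2\tau L^2}{\eta}(f(x)-\bar f)$, which is exactly the stated constant, and Theorem \ref{mainth01} then produces the rate $\bigl(1-(\mu\eta/(4\tau L^3))^2\bigr)^k$. To repair your proof you would need to replace the direct application of Lemma \ref{lemmore1} to $Ax=b$ by its application to $A^TAx=A^Tb$ and supply the Lipschitz-plus-quadratic-growth chain above.
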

\begin{proof}
  By the first-order optimality condition, the feasible set $\overline{X}$ can be written as 
  $$\overline{X}=\{x\in\RR^n: A^T(Ax-b)=0\}.$$
  Thus, the corresponding bilevel convex optimization problem \eqref{BilevelOpt} can be written as 
   $$\bar{x}=\arg\min_{x} \omega(x),~~\st~A^TAx=A^Tb.$$
  Using the basic fact that $\mathcal{R}(A^TA)=\mathcal{R}(A^T)$, we have that $A^Tb\neq 0$ and $A^Tb$ lies in $\mathcal{R}(A^TA)$. Therefore, invoking Lemma \ref{lemmore1} and using the fact $\mathcal{R}(A^TA)=\mathcal{R}(A^T)$ again, we conclude that there exists a constant $\tau>0$ such that for all $x\in\RR^n$ with $\partial \omega(x)\bigcap \mathcal{R}(A^T)\neq \emptyset$ and $x^*\in \partial \omega(x)\bigcap \mathcal{R}(A^T)\neq \emptyset$ we have 
\begin{equation}\label{EB01}
D_\omega^{x^*}(\bar{x},x)\leq \tau\cdot\|A^TAx-A^Tb\|^2.
\end{equation}
With this inequality, we further derive that 
\begin{equation}\label{EB02}
\inf_{z\in\overline{X}}D_\omega^{x^*}(z,x) \leq D_\omega^{x^*}(\bar{x},x)\leq \tau\cdot\|A^TAx-A^Tb\|^2= \tau\cdot\|\nabla f(x)\|^2. 
\end{equation}
Using the gradient Lipschitz continuity, we have that 
$$\|\nabla f(x)\|=\|\nabla f(x)-\nabla f(\hat{x})\|\leq L\cdot \|x-\hat{x}\|,~~\forall~\hat{x}\in\overline{X}.$$
Since that $f$ has the form of $g(Ax)$, it follows from \cite{necoara2019linear} that $f$ satisfies the quadratic growth condition, i.e., there exists a constant $\eta$ such that \eqref{QGcond} holds. This implies that 
$$\|\nabla f(x)\|\leq \inf_{\hat{x}\in\overline{X}} L\cdot \|x-\hat{x}\|=L\cdot \dist(x,\overline{X})\leq L\cdot\sqrt{\frac{2}{\eta}(f(x)-\bar{f})}.$$
 Therefore, combining with \eqref{EB02}, we obtain that 
$$\inf_{z\in\overline{X}}D_\omega^{x^*}(z,x) \leq \frac{2\tau L^2}{\eta}(f(x)-\bar{f}).$$
In other words, $f$ satisfies the Bregman distance growth condition and hence the conclusion follows from Theorem \ref{mainth01}. This completes the proof. 
\end{proof}

 In \cite{schopfer2019linear}, the authors provide a similar linear convergence result for the sparse Kaczmarz method, which is equivalent to the iterative scheme \eqref{Lbi}. It should be emphasized that they require that the linear system $Ax = b$ is consistent, as shown in Lemma \ref{lemmore1}. In our case, we drop this condition since we apply Lemma \ref{lemmore1} with the linear system $A^\top Ax = A^\top b$, which is always consistent. In contrast, we require that $A^\top b \neq 0$, this condition is more checkable compared with the condition in \cite{schopfer2019linear}.  

\section{Experiment results}\label{sec6}
In this section, we conduct four numerical experiments to illustrate different aspects of Algorithm \ref{ITS}. In Section \ref{sec:rbp} we investigate the use of the different step-sizes with a compressed sensing example.  In Section \ref{sec:fea} we show that it is simple to adapt Algorithm \ref{ITS} to feasibility problems by defining the distance function. In particular, we consider several sparse recovery problems under different types of noise, including Gaussian noise and uniform noise. Moreover, we will also answer the question of whether Algorithm \ref{ITS} computes an optimal solution when the constraint does not satisfy the condition in Theorem \ref{mainth1}.
\subsection{Linear inverse problem}\label{sec:rbp}
We conduct an experiment on the comparison of step-size rules for solving problem \eqref{gBP}. 
The gradient Lipschitz constant is $L: = \|A^\top A\|_2$. Our algorithm has the following iterative scheme: 
\begin{equation}\label{iter}
  \left\{
  \begin{aligned}
  x_{k+1}  & = \mathcal{S}_{\lambda}(x^*_k - t_k A^\top (Ax_k -b)),\\
  x_{k+1}^* & = x^*_k - t_k A^\top (Ax_k -b),
  \end{aligned}
  \right.
\end{equation}
where $t_k$ is the step-size, which can be chosen by the following strategy:
\begin{itemize}
  \item The exact step-size by the exact solution of the following dual problem
  \begin{equation}\label{pro:one}
    \Min_{t>0} \frac{1}{2}\|\mathcal{S}_{\lambda}(x^*_k - t A^\top (Ax_k -b))\|^2 + t\cdot \beta_k,
  \end{equation}
  where $\beta_k = \langle A^\top (Ax_k - b), x_k \rangle - \frac{\|A^\top (Ax_k - b)\|^2}{L} $. 
  \item The constant step-size $t_k = \frac{\mu}{L} = \frac{1}{L}$.
  \item The dynamic step-size $t_k = \frac{\|Ax_k - b\|_2^2}{\|A^\top (Ax_k - b)\|_2^2}$. 
\end{itemize}

In order to obtain an optimal solution we use the primal-dual method \cite{beck2017first}.  The method considers the following general optimization problem:
\begin{equation}
  \Min_{x\in \mathbb{R}^n} g(x) + h(Ax),
\end{equation}
where $g$ is strongly convex, closed, and proper. $h$ is properly closed and proximable. Employs the fast dual proximal gradient method whose general update step is
\begin{equation}\label{primal-dual}
\left\{
\begin{aligned}
u_k & =\operatorname{argmax}_u\left\{\left\langle u, A w_k\right\rangle-g(u)\right\}, \\
y_{k+1} & =w_k-\frac{1}{L_k} A u_k+\frac{1}{L_k} \operatorname{prox}_{\lambda L_k h}\left(A u_k-L_k w_k\right), \\
t_{k+1} & =\frac{1+\sqrt{1+4 t_k^2}}{2}, \\
w_{k+1} & =y_{k+1}+\left(\frac{t_k-1}{t_{k+1}}\right)\left(y_{k+1}-y_k\right).
\end{aligned}
\right.
\end{equation}
where $L_k$ is chosen either as a constant or by a backtracking procedure (default). The last computed vector $u_k$ is considered as the obtained solution and the last computed $y_k$ is the obtained dual vector. We apply the algorithm to solve \eqref{gBP} by letting $g(x) = \lambda \|x\|_1 + \frac{1}{2}\|x\|_2^2$ and $h(x) = \delta_{Q}(x)$, where $Q = \{x~:x=b\}$. 

For a given matrix $A$ we construct a recoverable sparse vector $x^{\dagger}$ using L1TestPack \cite{lorenz2012constructing}, compute the corresponding right-hand side $b = Ax^{\dagger}$, and choose the regularization parameter $\lambda = \|x^{\dagger}\|_1$. The results are summarized in Figure \ref{least}. We denote Algorithm \ref{ITS} with dynamic, exact, and constant step-sizes as ``Breg-dy'', ``Breg-opt'' and ``Breg-con'', respectively. The primal-dual method \eqref{primal-dual} is referred to as ``primal-dual''.  The reconstruction error is defined as $\|x_k - x^{\dagger}\|$, where $x_k$ is the iterate in Algorithm. The objective value denotes $\lambda\|x\|_1 + \frac{1}{2}\|x\|_2^2$. The feasibility is denoted by $\|Ax - b\|_2$. Figure \ref{least} (c) shows that All step-sizes have successfully attained feasible solutions, with the exact step size demonstrating linear convergence from the iterative point to the feasible solution, thereby validating Theorem \ref{mainth01}. 
Furthermore, all step-sizes have identified the optimal solution for problem \eqref{gBP}, as illustrated in Figure \ref{least} (b). This is attributed to the fulfillment of the conditions shown in Theorem \ref{mainth1} for problem \eqref{gBP}. 
Regarding the reconstruction error, it is observed that the exact step size exhibits a smaller reconstruction error compared to other step-sizes.


\begin{figure}[htbp]
\centering  
\subfigure[Reconstruction error ($\log$)]{
\includegraphics[width=5cm]{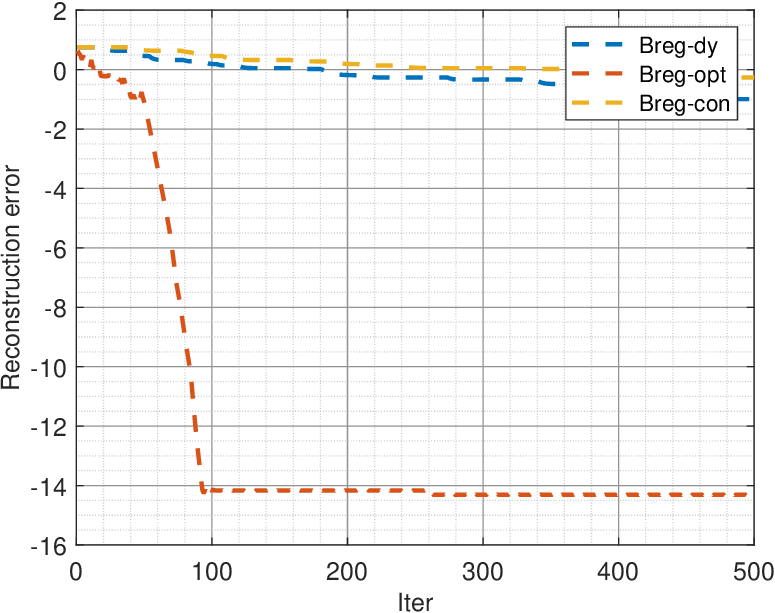}}
\subfigure[Objective value]{
\includegraphics[width=5cm]{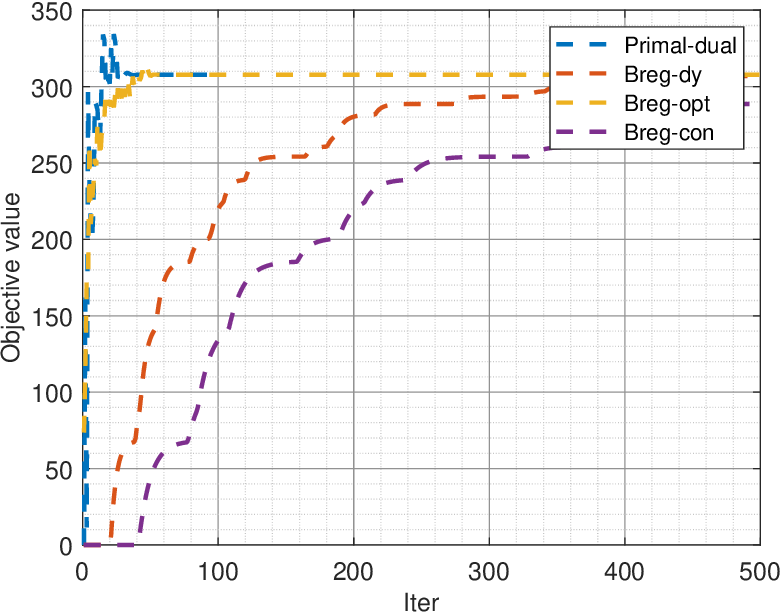}}
\subfigure[Feasibility ($\log$)]{
\includegraphics[width=5cm]{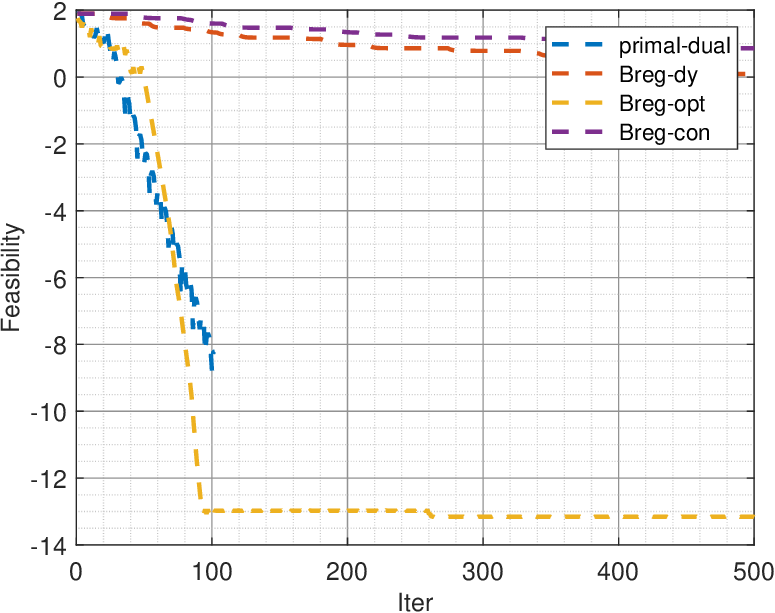}}
\caption{Comparison with different step-size for least square problem}
\label{least}
\end{figure}

\begin{figure}[htbp]
\centering  
\subfigure[True]{
\includegraphics[width=6cm]{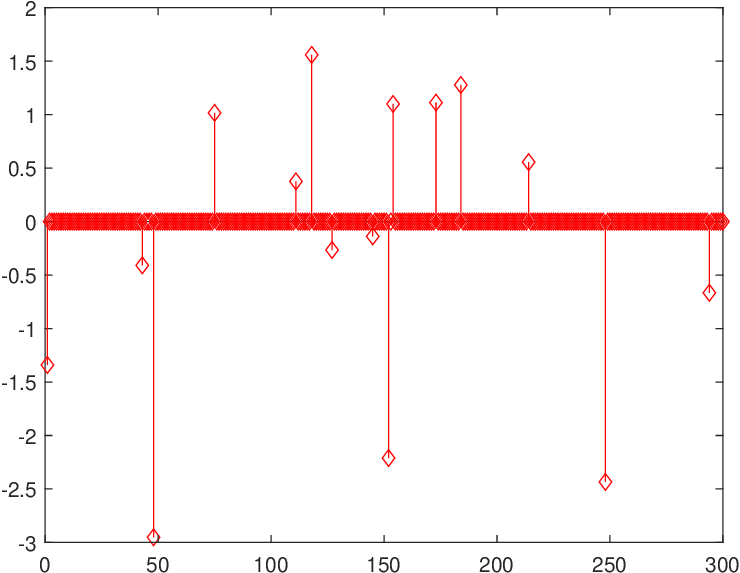}}
\subfigure[Constant]{
\includegraphics[width=6cm]{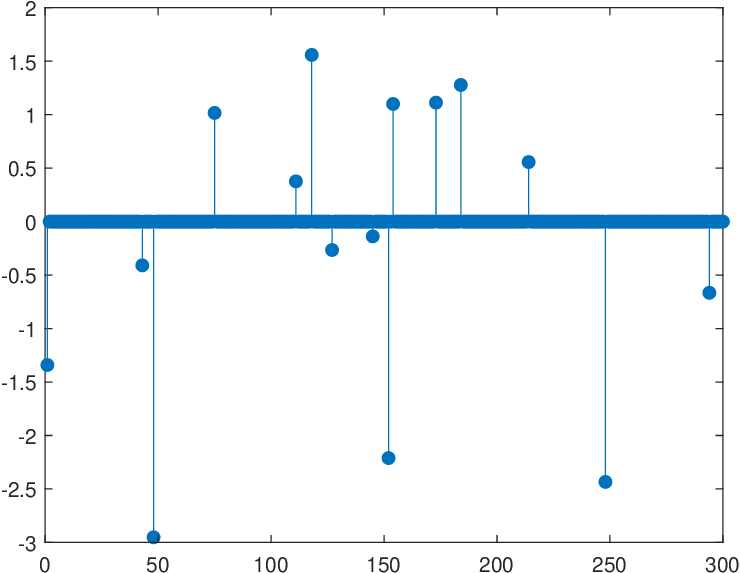}}
\subfigure[Optimal]{
\includegraphics[width=6cm]{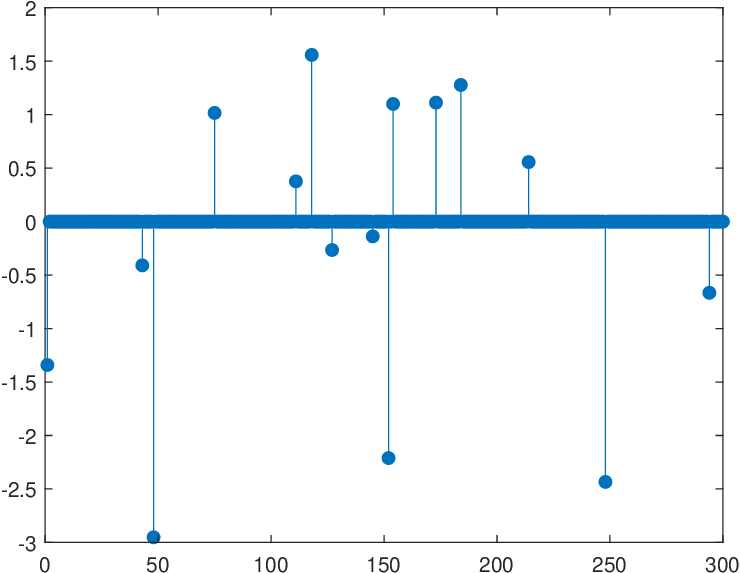}}
\subfigure[Primal-dual]{
\includegraphics[width=6cm]{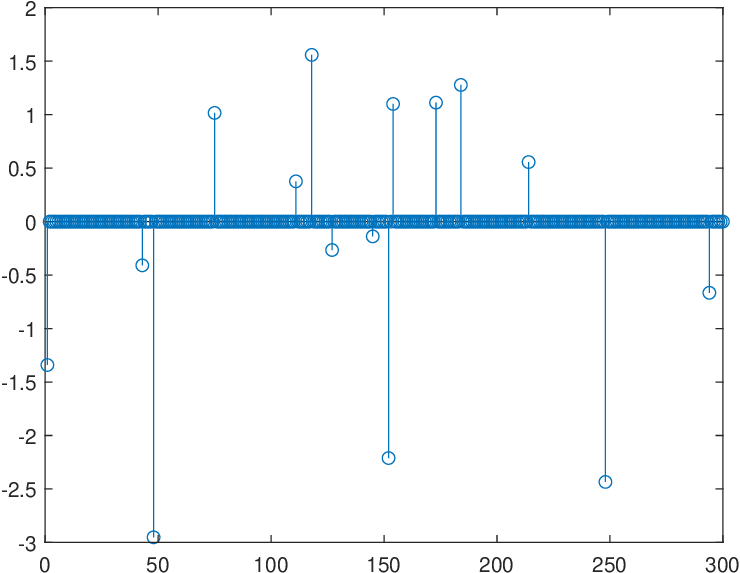}}
\caption{Comparison for least square problem}
\label{least1}
\end{figure}

\begin{figure}[htbp]
\centering  
\subfigure[Reconstruction error ($\log$)]{
\includegraphics[width=5cm]{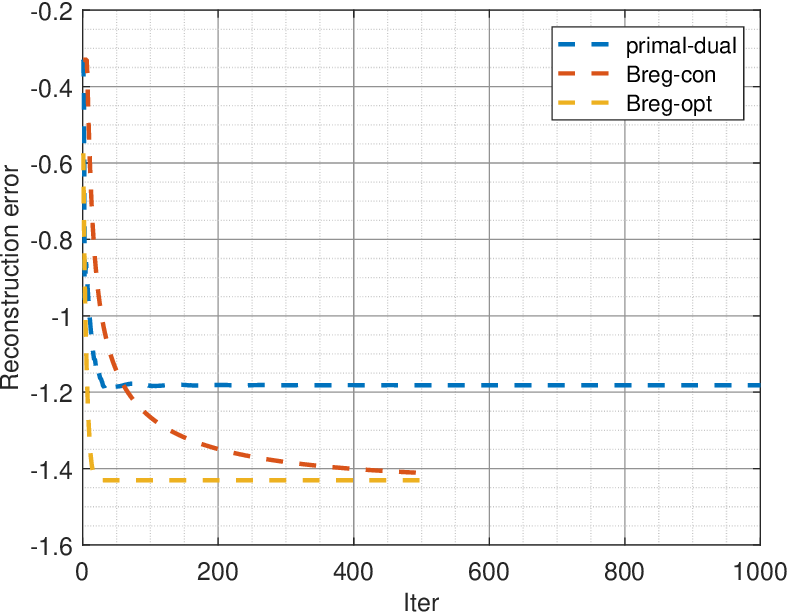}}
\subfigure[Objective value]{
\includegraphics[width=5cm]{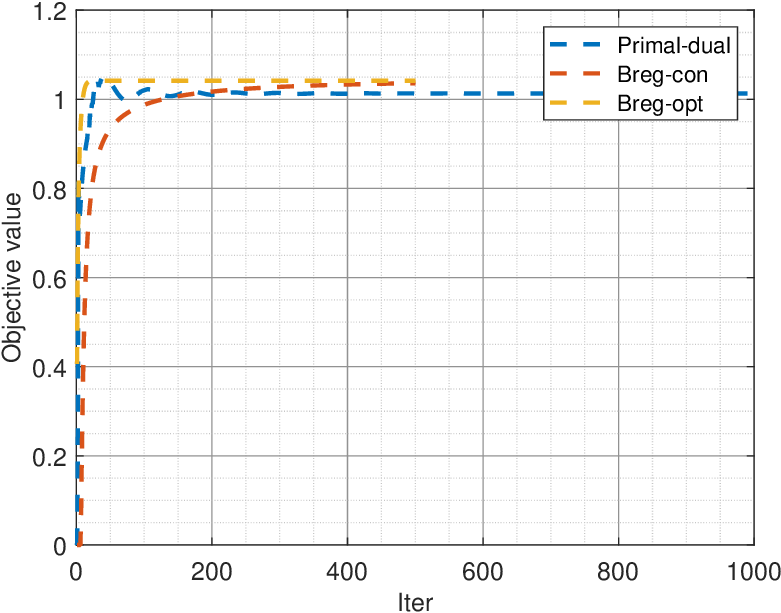}}
\subfigure[Feasibility ($\log$)]{
\includegraphics[width=5cm]{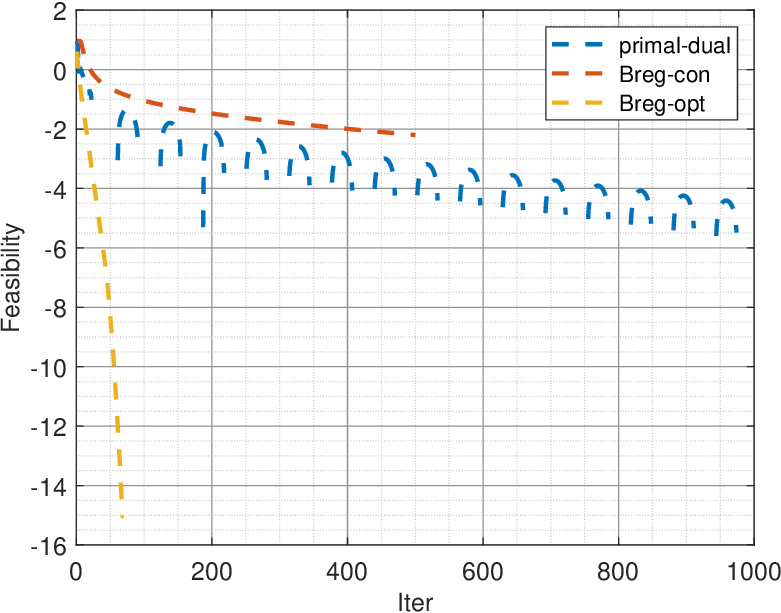}}
\caption{Comparison for the $\ell_2$ regularization problem}
\label{least-l2-1}
\end{figure}
\subsection{Feasibility problems}\label{sec:fea}
This subsection considers problem \eqref{CFP}. In particular, 
consider the following feasibility problem:
\begin{equation}\label{pro:fea}
  \Min_{x\in \mathbb{R}^n} \omega(x), \; \text{s.t.}\; Ax\in Q,
\end{equation}
where $\omega$ is strongly convex and $Q$ is an convex set. Denote $f(x): = \frac{1}{2}\mathbf{dist}^2(Ax,Q)$. Then one can construct an equivalent bilevel optimization problem as follows:
\begin{equation}\label{pro:bil:fea}
   \Min_{x\in \mathbb{R}^n} \omega(x), \; \text{s.t.}\; x\in \arg\min_x f(x) = \frac{1}{2}\mathbf{dist}^2(Ax,Q). 
\end{equation}
The gradient Lipschitz constant of $f$ is $L: = \|A^\top A\|_2$. Our algorithm has the following iterative scheme: 
\begin{equation}\label{iter11}
  \left\{
  \begin{aligned}
  x_{k+1}  & = \mathcal{S}_{\lambda}(x^*_k - t_k A^\top (Ax_k -\mathcal{P}_{Q}(Ax_k))),\\
  x_{k+1}^* & = x^*_k - t_k A^\top (Ax_k -\mathcal{P}_{Q}(Ax_k)).
  \end{aligned}
  \right.
\end{equation}
In order to illustrate the performance of Algorithm \ref{ITS} to solve problem \eqref{pro:bil:fea}, we consider the problem of recovering sparse solutions
of linear equations $Ax = b$, where only noisy data $b^{\sigma}$ is available for different noise models. In this case, the convex set $Q: = \{x~:\|x - b^{\sigma}\| \leq \sigma\}$, where $\sigma$ denote the noise. The choice of the norm $\|\cdot\|$ is dictated by the noise characteristics. In the following, we will consider three types of norms, i.e.,  the $\ell_2$-norm for Gaussian noise
and the $\ell_{\infty}$-norm for uniformly distributed noise.  In order to obtain the optimal solution, we also run the primal-dual algorithm to solve \eqref{pro:bil:fea} by letting $g(x) = \lambda \|x\|_1 + \frac{1}{2}\|x\|_2^2$ and $g(x) = \delta_{Q}(x)$. 
\subsubsection{Sparse recovery with Gaussian noise}
This subsection focus on an special case of problem \eqref{pro:bil:fea}, where $Q = \{y\in\mathbb{R}^m~:~\|y - b^{\sigma}\|_2 \leq \sigma\}$, $\sigma$ is the noise.
In this case
\begin{equation}\label{11}
  A x-\mathcal{P}_Q(A x)=\max \left\{0,1-\frac{\sigma}{\left\|A x-b^\sigma\right\|_2}\right\} \cdot\left(A x-b^\sigma\right).
\end{equation}
For some matrix $A\in \mathbb{R}^{m\times n}$ we produce a sparse vector $x^{\dagger}$ with only 30 nonzero entries and calculate $b = Ax^{\dagger}$. We choose $b^{\sigma}$ by adding the Gaussian noise to $b$, and $\sigma = \|b - b^{\sigma}\|_2$.

Figure \ref{least-l2-1} shows the result. Despite the successful identification of feasible solutions for all step-sizes, Figure \ref{least-l2-1} (b) illustrates that they yield larger objective function values compared to the primal-dual algorithm. This indicates that our algorithm may not necessarily find the optimal solution. The reason lies in the failure of problem \eqref{pro:bil:fea} to satisfy the conditions in Theorem \ref{mainth1}, thereby indirectly validating the reasonableness of these conditions.
Despite the absence of an optimal solution, Figure \ref{least-l2-1} (a) demonstrates that, under the exact step size, our algorithm accurately recovers the original solution. This further underscores the practical significance of our algorithm. In order to show the effect of the level of noise, we let the noise constant $\sigma = c\|b - b^{\sigma}\|_2$, where $c = 0.1, 0.5, 1$. Figure \ref{least-l2-2} shows that a low level of noise achieves a lower reconstruction error.

\begin{figure}[htbp]
\centering  
\subfigure[Reconstruction error ($\log$)]{
\includegraphics[width=5cm]{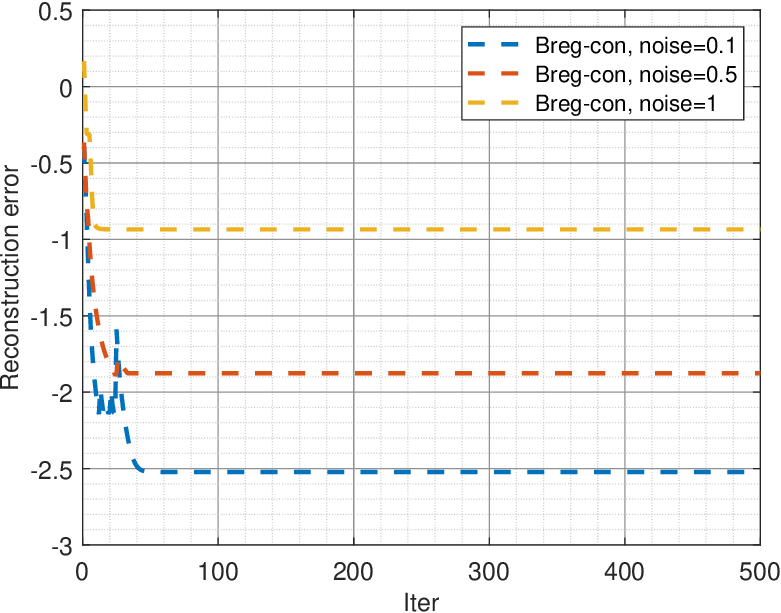}}
\subfigure[Objective value]{
\includegraphics[width=5cm]{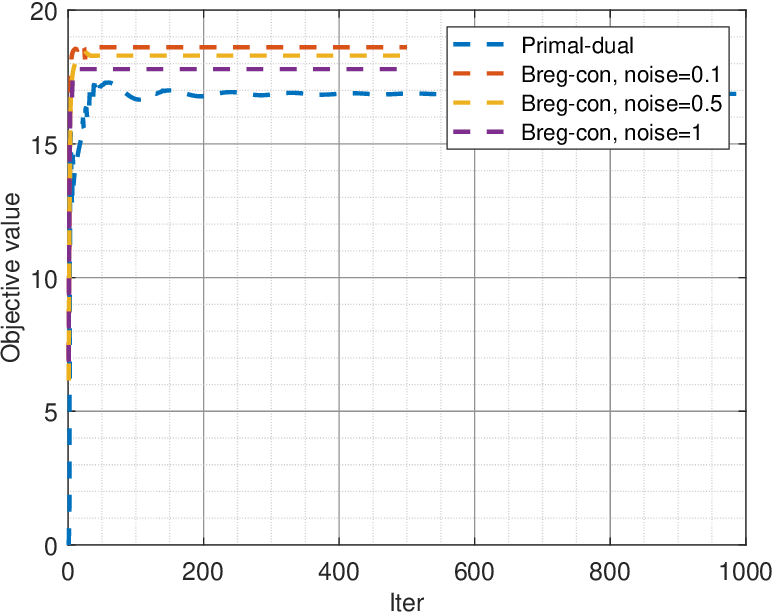}}
\subfigure[Feasibility ($\log$)]{
\includegraphics[width=5cm]{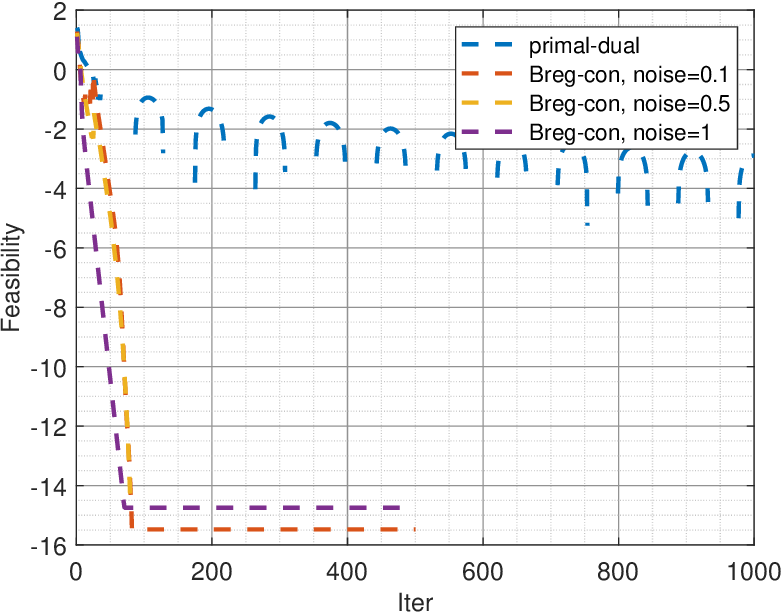}}
\caption{Comparison with different noise level for the $\ell_2$ regularization problem}
\label{least-l2-2}
\end{figure}

\subsubsection{Sparse recovery with uniform noise}
This subsection focus on an special case of problem \eqref{pro:bil:fea}, where $Q = \{y\in\mathbb{R}^m~:~\|y - b\|_{\infty} \leq \sigma\}$, $\sigma$ is the noise. In this case
\begin{equation}\label{12}
  A x-\mathcal{P}_Q(A x)=\mathcal{S}_{\sigma}(Ax - b^{\sigma}).
\end{equation}
For some matrix $A\in \mathbb{R}^{m\times n}$ we produce a sparse vector $x^{\dagger}$ with only 30 nonzero entries and calculate $b = Ax^{\dagger}$. We choose $b^{\sigma}$ by adding uniformly distributed noise with range $[-1,1]$, and $\sigma = \|b - b^{\sigma}\|_{\infty}$. Figure \ref{least-linf} shows a similar result as the previous subsection. Our algorithm with the exact step-size achieves better performance in terms of the reconstruction error and feasibility. 
\begin{figure}[htbp]
\centering  
\subfigure[Reconstruction error ($\log$)]{
\includegraphics[width=5cm]{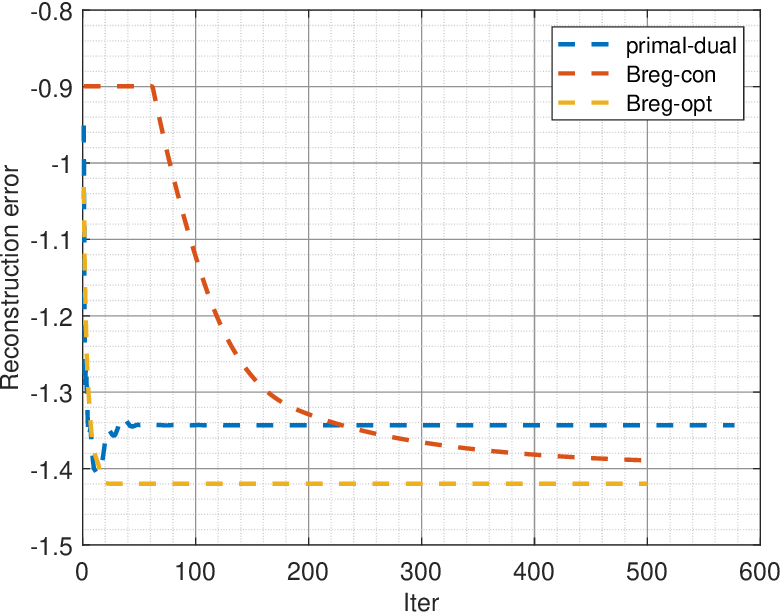}}
\subfigure[Objective value]{
\includegraphics[width=5cm]{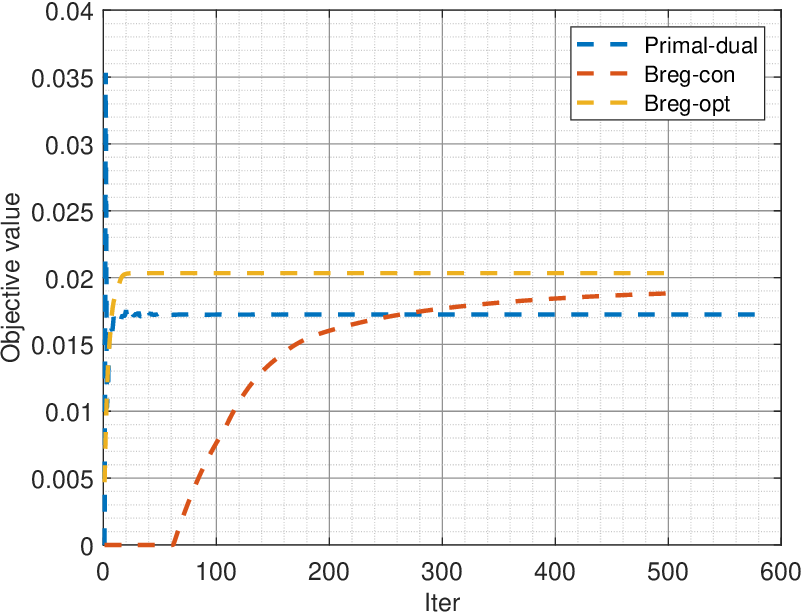}}
\subfigure[Feasibility ($\log$)]{
\includegraphics[width=5cm]{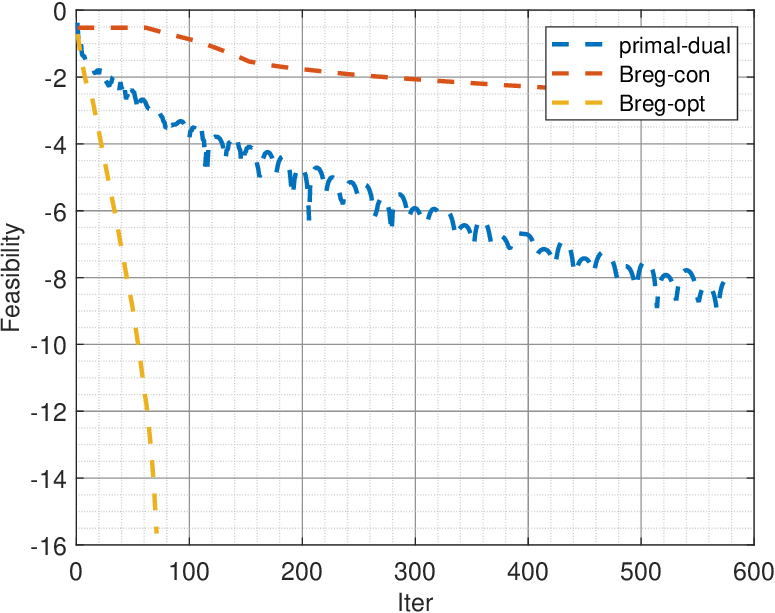}}
\caption{Comparison for the $\ell_\infty$ regularization problem}
\label{least-linf}
\end{figure}

\section{Conclusion}
This paper introduces a novel bilevel optimization formulation, offering a cut-and-project perspective to reevaluate Bregman regularized iterations. This approach not only encompasses existing methods but also extends to tackle broader inverse problems, including certain sparse noise models. Through detailed analysis, we explore the impact of step-sizes on algorithmic performance, delineating a practical range for their selection and providing a convergence guarantee to a feasible point. Our unified convergence condition offers a less restrictive criterion compared to existing literature, enhancing the applicability of our framework. Additionally, we introduce a Bregman distance growth condition, proving the linear convergence of our algorithm.

\bibliography{ref}

\bibliographystyle{plain}
\end{document}